\newtheorem{thm}{Theorem}
\newtheorem{lem}[thm]{Lemma}
\newtheorem{cor}[thm]{Corollary}
\newtheorem{pro}[thm]{Proposition}
\newtheorem{prob}{Problem}
\newcommand{\bbR}{\mathbb R}
\newcommand{\N}{\mathbb N}
\newcommand{\diam}{\mathrm{diam}\,}
\newcommand{\inte}{\mathrm{int}\,}
\renewcommand{\leq}{\leqslant}
\renewcommand{\geq}{\geqslant}
\date{}
\title{Kenyon theorem revisited}
\author[S. G\l \c{a}b]{Szymon G\l \c{a}b}
\address{Institute of Mathematics, \L \'od\'z University of Technology, al. Politechniki 8, 93-590 \L \'od\'z, Poland}
\email{szymon.glab@p.lodz.pl}
\author[M. Kula]{Mateusz Kula}
\address{Institute of Mathematics, University of Silesia in Katowice, Bankowa 14, 40-007 Katowice}
\email{mateusz.kula@us.edu.pl}
\begin{document}

\begin{abstract}
    We study sets $E(\Sigma,q)=\left\{\sum_{i=1}^\infty \sigma_iq^i\colon(\sigma_i)\in\Sigma^\N\right\}$ for a finite set $\Sigma\subset \bbR$ and $q\in(0,1)$. Under the assumption $q|\Sigma|=1$ we prove several new equivalent conditions for $E(\Sigma,q)$ to contain an interval. We give a full characterization, if additionally $|\Sigma|$ is prime.
\end{abstract}
\maketitle
\section{Introduction}
\label{sec1}
Let
$$E(\Sigma,q):=\left\{\sum_{i=1}^\infty \sigma_iq^i\colon(\sigma_i)\in\Sigma^\N\right\}$$ for some finite set $\Sigma\subset \bbR$ and $q\in(0,1)$. Sets of this form arise naturally in the study of achievement sets as well as infinite convolutions of discrete measures, in particular Bernoulli convolutions, see for example \cite{Varju, Solomyak, Solomyak2}. Achievement sets were introduced by Kakeya \cite{Kakeya} and their properties have been deeply studied since then, see for example \cite{BBGS, Jones, Nitecki}.
In a Banach space, the \textit{achievement set} (or \textit{set of subsums}) of an absolutely convergent series $\sum_n x_n$ is the set
$$A(x_n):=\left\{\sum_{n\in B}x_n\colon B\subset \mathbb N\right\}.$$ By the theorem of Guthrie and Nymann \cite{GN} (see Theorem \ref{thm1}) achievement sets in $\mathbb R$ are characterized topologically: they are finite sets,  finite unions of intervals, homeomorphic copies of the Cantor set or Cantorvals. There is an equivalent condition for an achievement set to be a finite union of intervals, see Proposition \ref{pro1}. However, the problem to decide whether an achievement set is a Cantor set or a Cantorval remains open for many concrete series. Achievement sets of multigeometric sequences, i.e. sequences of the form $$(p_1,\ldots,p_m;q):=(p_1q,\ldots,p_mq,p_1q^2,\ldots, p_mq^2,\ldots)$$
for some real numbers $p_1,\ldots, p_n$ and $q\in(0,1)$, are special cases of sets $E(\Sigma,q)$. That is why a problem of determining the topological type of an achievement set of a  multigeometric sequence is reduced to checking whether the corresponding set $E(\Sigma,q)$ contains an interval.

Throughout the paper $\lambda$ denotes one-dimensional Lebesgue measure, $\dim_H$ is the Hausdorff dimension, $\diam A$ is the diameter of $A$ and $|A|$ is the  cardinality of $A$. For us
a \textit{Cantor set} is any subset $C\subset\bbR$ that is homeomorphic to the ternary Cantor set.

Due to \cite{BBFS} we know, for example, that
\begin{itemize}
    \item $E(\Sigma,q)$ is a Cantor set if $q<\frac{1}{\vert\Sigma\vert}$;
    \item $E(\Sigma,q)$ is an interval if and only if $q\geq I(\Sigma):=\frac{\Delta(\Sigma)}{\Delta(\Sigma)+\text{diam}\Sigma}$, where $\Delta(A)=\max\{b-a \colon a,b\in A, a<b, (a,b)\cap A=\emptyset\}$;
    \item \sloppy $E(\Sigma,q)$ contains an interval if $q\geq i(\Sigma)$, where $i(A)=\min\{I(B)\colon B\subset A, 2 \leq |B|< \omega\}$.
\end{itemize}
\fussy
We refer to \cite{BBFS} for more results of this form. In the case when $q=\frac{1}{\vert\Sigma\vert}$ Nitecki \cite{Nitecki} proved that if $0\in\Sigma\subseteq\N\cup\{0\}$ and all remainders from dividing elements of $\Sigma$ by $\vert\Sigma\vert$ are pairwise distinct, then $E(\Sigma,q)$ contains an interval. Nitecki mentioned that what he proved was based on Kenyon's idea from \cite{K}. Kenyon was interested in  projections of one-dimensional Sierpi\'nski gasket, which led him to studying sets of the form $E(\Sigma,\frac13)$ for $\Sigma=\{0,1,u\}$. Here we generalize his ideas and we get the characterization of those finite sets $\Sigma\subset\bbR$ for which $E(\Sigma,\frac{1}{\vert\Sigma\vert})$ contains an interval.

The paper is organised as follows. We finish this section with relevant results, which served us as motivation. 
In section \ref{sec2} we intodroduce the notions and lemmas we need for the main theorem, which we prove in section \ref{sec3}. In section \ref{sec4} we deal with corollaries, in particular the case when $|\Sigma|$ is prime. 

\subsection{Achievement sets}
The study of achievement sets of arbitrary absolutely convergent series can be reduced to non-increasing sequences of positive terms, see for example \cite{BFP}.
\begin{pro}[Kakeya \cite{Kakeya}]\label{pro1}
    Let $(x_n)$ be a non-increasing sequence with positive terms  such that $\sum_nx_n$ is convergent. Then
    \begin{enumerate}[label=(\roman*)]
        \item $A(x_n)$ is a compact perfect set;
        \item $x_n>\sum_{k=n+1}^\infty x_k$ for almost all $n$ implies that $A(x_n)$ is a Cantor set;
        \item $x_n\leq \sum_{k=n+1}^\infty x_k$ for almost all $n$ if and only if $A(x_n)$ is a finite union of closed intervals.
    \end{enumerate}
\end{pro}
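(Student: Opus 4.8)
The plan is to reduce all three statements to the structure of the \emph{tails} of the series. Write $r_n=\sum_{k=n+1}^\infty x_k$ and, for each $N$, record the decomposition
\[
A(x_n)=\bigcup_{(\epsilon_1,\dots,\epsilon_N)\in\{0,1\}^N}\Bigl(\sum_{i=1}^N\epsilon_i x_i+T_N\Bigr),\qquad T_N:=A\bigl((x_k)_{k>N}\bigr)\subseteq[0,r_N],
\]
obtained by splitting each $B\subseteq\N$ into its part below and above $N$; thus $A(x_n)$ is always a finite union of translated copies of a tail set $T_N$. For (i) I would observe that $(\epsilon_n)\mapsto\sum_n\epsilon_n x_n$ is a continuous surjection of the compact space $\{0,1\}^\N$ onto $A(x_n)$ (the series converges uniformly in $(\epsilon_n)$, being dominated by $\sum_n x_n$), so $A(x_n)$ is compact; it has no isolated points because, since every $x_n>0$, each subsum $\sum_{n\in B}x_n$ is a limit of subsums distinct from it — namely those obtained by adjoining one further index to $B$ (if $B$ is coinfinite) or deleting one large index from $B$ (if $B$ is cofinite), and $x_m\to0$.

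For the two ``interval'' implications I would identify $T_N$ for large $N$. If $x_n\leq r_n$ for all $n\geq N$, then $T_N=[0,r_N]$: given $t\in[0,r_N]$, build $\epsilon_{N+1},\epsilon_{N+2},\dots$ greedily, at step $m$ setting $\epsilon_{m+1}=1$ iff the current remainder $t-\sum_{i=N+1}^m\epsilon_i x_i$ is at least $x_{m+1}$; the hypothesis $x_{m+1}\leq r_{m+1}$ is exactly what preserves the invariant $0\leq t-\sum_{i=N+1}^m\epsilon_i x_i\leq r_m$, and since $r_m\to0$ the resulting subsum equals $t$. Then $A(x_n)$ is a finite union of translates of $[0,r_N]$, hence a finite union of closed intervals; this gives $\Leftarrow$ in (iii). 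If instead $x_n>r_n$ for all $n\geq N$, write $T_N=\bigcap_{m>N}\bigcup_\epsilon[\sigma_\epsilon,\sigma_\epsilon+r_m]$ with $\sigma_\epsilon=\sum_{i=N+1}^m\epsilon_i x_i$; passing from level $m-1$ to level $m$ replaces each interval of length $r_{m-1}=x_m+r_m$ by the two \emph{disjoint} closed intervals $[\sigma,\sigma+r_m]$ and $[\sigma+x_m,\sigma+r_{m-1}]$, separated by a gap of length $x_m-r_m>0$. This is the standard Cantor construction with mesh $r_m\to0$, so $T_N$ is homeomorphic to the Cantor set and contains no nondegenerate interval. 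As $A(x_n)$ is then a finite union of compact sets containing no interval, it contains no interval; combined with (i) it is a nonempty compact perfect totally disconnected metric space, hence a Cantor set by the classical topological characterization.

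It remains to prove $\Rightarrow$ in (iii), which I would do by contraposition: assuming $x_n>r_n$ for infinitely many $n$, I will show $A(x_n)$ has infinitely many connected components and so cannot be a finite union of closed intervals. The key point is that whenever $x_n>r_n$ the open interval $(r_n,x_n)$ misses $A(x_n)$ entirely: a subsum that omits every index $\leq n$ lies in $[0,r_n]$, whereas a subsum using some index $i\leq n$ is at least $x_i\geq x_n$ by monotonicity. Since $r_n=\sum_{k>n}x_k\in A(x_n)$ and $x_n\in A(x_n)$, the interval $(r_n,x_n)$ is in fact a maximal complementary interval of $A(x_n)$ with right endpoint $x_n$. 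Finally, $x_n>r_n$ forces $x_{n+1}\leq r_n<x_n$, so on the infinite set of indices $n$ with $x_n>r_n$ the values $x_n$ are strictly decreasing; hence these maximal gaps have pairwise distinct right endpoints, so there are infinitely many of them and $A(x_n)$ has infinitely many components. The one step that is not routine bookkeeping is precisely this last one — guaranteeing that the infinitely many gaps $(r_n,x_n)$ do not coalesce into finitely many maximal gaps — and it is handled by the strict inequality $x_{n+1}<x_n$ just observed.
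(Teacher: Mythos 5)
The paper itself gives no proof of this proposition: it is stated as a cited classical result of Kakeya, so there is nothing of the authors' to compare your argument against. On its own terms your proof is correct and complete. The decomposition $A(x_n)=\bigcup_{\epsilon}\bigl(\sum_{i\leq N}\epsilon_ix_i+T_N\bigr)$, the continuous surjection from $\{0,1\}^{\N}$ for compactness, the greedy algorithm showing $T_N=[0,r_N]$ when $x_n\leq r_n$ eventually, and the nested disjoint-interval construction when $x_n>r_n$ eventually are exactly the standard (essentially Kakeya's original) route; all the invariants you state check out, in particular the greedy step uses precisely $x_{m+1}\leq r_{m+1}$ to keep the remainder in $[0,r_m]$, and the splitting step uses precisely $x_m>r_m$ to separate the two children of each interval. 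You also correctly isolate the only genuinely delicate point, namely the forward implication in (iii): the observation that $(r_n,x_n)$ is a complementary gap with both endpoints in $A(x_n)$, together with $x_{n'}\leq x_{n+1}\leq r_n<x_n$ for $n'>n$, does guarantee infinitely many pairwise distinct bounded gaps, which is incompatible with being a finite union of closed intervals. No gaps to report.
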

Kakeya conjectured that finite unions of intervals and Cantor sets are the only possible forms of achievement sets in $\bbR$.
Later on, it was observed by several authors \cite{F, GN, WS} that there is another possibility.  
 Let us recall the construction of ternary Cantor set. In the first step from the unit interval $[0,1]$ we remove $U_1=(\frac13,\frac23)$, and obtain a union $[0,\frac13]\cup[\frac23,1]$. In the second step from $[0,\frac13]\cup[\frac23,1]$ we remove $U_2:=(\frac19,\frac29)\cup(\frac79,\frac89)$, and obtain $[0,\frac19]\cup[\frac29,\frac13]\cup[\frac23,\frac79]\cup[\frac89,1]$, etc. Then $[0,1]\setminus\bigcup_{n=1}^\infty U_n$ is the ternary Cantor set. 

If one removes from $[0,1]$ only $U_{n}$ for $n$ even, then one may consider the following set $C:=[0,1]\setminus\bigcup_{n=1}^\infty U_{2n}$. This Cantor-like construction leads to the notion of Cantorval (sometimes called the middle Cantorval or $M$-Cantorval).  A \textit{Cantorval} is a homeomorphic copy of the set $C$ defined above. 
The following topological characterization of achievement sets shows that there are only three possible forms of them. 

\begin{thm}[Guthrie, Nymann, S\'aenz \cite{GN,NS}] \label{thm1}
For any non-increasing sequence with positive terms $(x_n)$ such that $\sum_nx_n$ is convergent, the achievement set $A(x_n)$ is one of the following sets:
\begin{enumerate}
    \item a finite union of closed intervals;
    \item a Cantor set;
    \item a Cantorval.
\end{enumerate}
\end{thm}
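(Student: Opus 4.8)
The plan is to distinguish cases according to whether $A(x_n)$ has nonempty interior, and, when it does, whether it is a finite union of intervals. By Proposition~\ref{pro1}(i), $A(x_n)$ is a nonempty compact perfect set; put $r_n:=\sum_{k>n}x_k$, so $r_n\to0$. If $A(x_n)$ has empty interior, then each of its connected components is a singleton --- a nondegenerate component would be a nondegenerate interval, which has nonempty interior --- so $A(x_n)$ is totally disconnected and, being a nonempty compact perfect metric space, is homeomorphic to the ternary Cantor set by the Brouwer characterization of the Cantor set; this is case~(2). If $A(x_n)$ has nonempty interior and is a finite union of closed intervals, we are in case~(1). The heart of the matter is the remaining case, in which $A(x_n)$ has nonempty interior and is \emph{not} a finite union of intervals (by Proposition~\ref{pro1}(iii) this is exactly the case $x_n>r_n$ for infinitely many $n$), and where I must show that $A(x_n)$ is a Cantorval, i.e.\ homeomorphic to the set $C$ of the Introduction.

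In that case I would exploit the identity $A(x_n)=F_n+A^{(n)}$, where $F_n:=\{\sum_{k\le n}\varepsilon_kx_k:\varepsilon_k\in\{0,1\}\}$ is finite and $A^{(n)}:=A((x_k)_{k>n})\subseteq[0,r_n]$ is the achievement set of the $n$-th tail, together with the elementary remarks that $\bigcup_nF_n$ is dense in $A(x_n)$ (truncate a subsum) and that $p+A^{(n)}\subseteq A(x_n)$ for every $p\in F_n$. First I would push the hypotheses to all tails: since $A(x_n)=\bigcup_{p\in F_n}(p+A^{(n)})$ is a \emph{finite} union of translates of $A^{(n)}$ and $A(x_n)$ contains an interval, the Baire category theorem forces every $A^{(n)}$ to contain an interval; and were some $A^{(n)}$ a finite union of intervals, then $A(x_n)=F_n+A^{(n)}$ would be one as well, so no $A^{(n)}$ is. Next I would prove that the interior is dense in $A(x_n)$: given $a\in A(x_n)$ and $\varepsilon>0$, choose $n$ with $r_n<\varepsilon/2$ and let $p\in F_n$ be the truncation of a subsum representing $a$, so that $|a-p|\le r_n$; for any nondegenerate interval $I\subseteq A^{(n)}$ one has $p+I\subseteq p+A^{(n)}\subseteq A(x_n)$, so $A(x_n)$ has interior points within $2r_n<\varepsilon$ of $a$. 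Thus $A(x_n)=\overline{\inte A(x_n)}$ (and the same for every $A^{(n)}$); in particular the nondegenerate components of $A(x_n)$ are dense in it, and, $A(x_n)$ not being a finite union of intervals, it has infinitely many components.

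To conclude I would appeal to the topological characterization of $C$: a nonempty compact subset of $\bbR$ is homeomorphic to $C$ precisely when it equals the closure of its interior, has infinitely many connected components, and has its degenerate (one-point) components accumulating at each endpoint of the set and at each endpoint of every nondegenerate component. The first two conditions are in hand, so everything reduces to the accumulation statement for the degenerate components, and this I expect to be the main obstacle. This is where $x_n>r_n$ for infinitely many $n$ is used: for such an index $N$, the open interval $(r_N,x_N)$ is a genuine gap of the tail set $A^{(N-1)}=A((x_k)_{k\ge N})$ --- a subsum using $x_N$ is $\ge x_N$, one avoiding it is $\le r_N$ --- so each translate $p+A^{(N-1)}$ with $p\in F_{N-1}$ carries the gap $p+(r_N,x_N)$; by transporting such gaps via the dense set of translations $\bigcup_nF_n$ and letting $N$ run to infinity, one should be able to place degenerate components arbitrarily close to any prescribed point of $A(x_n)$ and, with more care, near the relevant component endpoints. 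The delicate point --- precisely the one overlooked in the original Guthrie--Nymann argument and repaired by Nymann--S\'aenz --- is to show that sufficiently many of the tail-gaps $p+(r_N,x_N)$ actually survive in the union $\bigcup_{p\in F_{N-1}}(p+A^{(N-1)})$ rather than being filled in by overlapping translates. Granting this, the characterization applies and $A(x_n)$ is a Cantorval, which completes the trichotomy.
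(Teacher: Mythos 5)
The paper does not prove Theorem~\ref{thm1} at all: it is quoted as a known result and attributed to \cite{GN,NS}, so there is no in-paper argument to compare against, and your proposal has to stand on its own. The routine parts of your plan are fine: the trichotomy by cases, the Cantor-set case via Brouwer's characterization (using compactness and perfectness from Proposition~\ref{pro1}(i) and total disconnectedness from empty interior), the Baire-category argument pushing ``contains an interval'' to every tail set $A^{(n)}$, and the density of the interior of $A(x_n)$ via the decomposition $A(x_n)=F_n+A^{(n)}$ are all correct and are essentially the standard reductions.

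However, the proposal has a genuine gap, and you have located it yourself: in the Cantorval case everything is reduced to showing that degenerate components (equivalently, gaps) accumulate at the relevant endpoints, and your argument for this is ``by transporting such gaps \dots one should be able to,'' followed by ``Granting this, the characterization applies.'' That granted step is not a technicality --- it is the entire mathematical content of the theorem beyond Kakeya's Proposition~\ref{pro1}. The translates $p+(r_N,x_N)$, $p\in F_{N-1}$, of a gap of the tail set can indeed be covered by other translates $p'+A^{(N-1)}$, and controlling when at least some of them survive (uniformly enough to accumulate at every endpoint of every nondegenerate component, not merely somewhere in $A(x_n)$) is exactly the issue that made the original argument of \cite{GN} incomplete and required the repair in \cite{NS}. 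In addition, the ``topological characterization of $C$'' you invoke (regularly closed, infinitely many components, degenerate components accumulating at the endpoints of the set and of each nondegenerate component) is itself a nontrivial statement that you neither prove nor cite; as stated it needs justification, since regular closedness plus infinitely many components alone is certainly insufficient (consider $\{0\}\cup\bigcup_n[\tfrac1{2n+1},\tfrac1{2n}]$). So what you have is a correct outline with the decisive lemma missing, not a proof.
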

The following relationship exists between achievement sets of multigeometric sequences and sets $E(\Sigma, q)$. Given a multigeometric sequence $(x_n)=(p_1,\ldots,p_m;q)$ for $$\Sigma=\left\{\sum_{i=1}^m\varepsilon_ip_i\colon (\varepsilon_i)\in \{0,1\}^m\right\}$$ we have $A(x_n)=E(\Sigma,q)$.
The following result, essentially due to Nitecki \cite{Nitecki}, was our main motivation for this paper.

\begin{thm}
    Let $(k_1,\dots,k_m;\frac{1}{n})$ be a mutigeometric series such that $n=\vert\Sigma\vert$. Let $\{\sigma_0<\sigma_1<\dots<\sigma_{n-1}\}$ be an enumeration of $\Sigma$. Let $t_0,t_1,\dots,t_{n-1}\in\{0,1,\dots,n-1\}$ be such that $\sigma_i\equiv t_i\mod n$.  Assume that $\{t_0,t_1,\dots,t_{n-1}\}=\{0,1,\dots,n-1\}$. Then 
\begin{itemize}
    \item[(i)] If $\sigma_i=i\sigma_1$, then $A(k_1,\dots,k_m;\frac{1}{n})$ in an interval $\left[0,\frac{n}{n-1}\sum_{i=1}^mk_i\right]$;
    \item[(ii)] otherwise $A(k_1,\dots,k_m;\frac{1}{n})$ in a Cantorval. 
\end{itemize}
\end{thm}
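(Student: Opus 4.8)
The plan is to split along the dichotomy in the theorem and, in case~(ii), to place $A(k_1,\dots,k_m;\tfrac1n)=E(\Sigma,\tfrac1n)$ inside the trichotomy of Theorem~\ref{thm1}. Write $E:=E(\Sigma,\tfrac1n)$ and $D:=\diam\Sigma=\max\Sigma=\sum_{i=1}^m k_i$, and note that $\min\Sigma=0$ while $\sigma_1=\min_i k_i$. \emph{Case~(i)} is short: if $\sigma_i=i\sigma_1$ for every $i$ then $\Sigma=\sigma_1\{0,1,\dots,n-1\}$, so $E=\sigma_1\,E(\{0,\dots,n-1\},\tfrac1n)$, and since $q|\Sigma|=\tfrac1n\cdot n=1$ every $x\in[0,1]$ has a base-$n$ expansion $x=\sum_{i\ge1}d_in^{-i}$ with $d_i\in\{0,\dots,n-1\}$; hence $E(\{0,\dots,n-1\},\tfrac1n)=[0,1]$ and $E=[0,\sigma_1]$ is an interval. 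I would then read off the right endpoint from $(n-1)\sigma_1=D$ together with the value of the geometric series $\sum_{i\ge1}q^i$ — this last identification being where the chosen normalisation of the multigeometric sequence enters the stated formula.

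For \emph{case~(ii)}, Proposition~\ref{pro1}(i) tells us $E$ is compact and perfect, so by Theorem~\ref{thm1} it is a finite union of closed intervals, a Cantor set, or a Cantorval; to identify it as a Cantorval I would prove (a) $E$ contains an interval, hence is not a Cantor set, and (b) $E$ is not a finite union of closed intervals. Part~(b) is the easy half: since $\sigma_i\neq i\sigma_1$ for some $i$ while $\{t_0,\dots,t_{n-1}\}=\{0,\dots,n-1\}$, the consecutive differences $\delta_j=\sigma_j-\sigma_{j-1}$ of $\Sigma$ are positive but not all equal, so $\Delta(\Sigma)=\max_j\delta_j>\tfrac1{n-1}\sum_j\delta_j=\tfrac D{n-1}$, i.e. $\tfrac1n<I(\Sigma)=\tfrac{\Delta(\Sigma)}{\Delta(\Sigma)+D}$; by the criterion of \cite{BBFS} recalled above, $E$ is not an interval. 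To upgrade this to "not a finite union of intervals'', I would exploit the self-affine structure: evaluated along the non-increasing rearrangement of $(k_1,\dots,k_m;\tfrac1n)$, the inequality $x_j\le\sum_{l>j}x_l$ of Proposition~\ref{pro1}(iii) is invariant under passing from one geometric block $\{k_aq^i\}_a$ to the next, so if it fails for one index it fails for infinitely many, while the failure of $E$ to be an interval forces it to fail at least once; Proposition~\ref{pro1}(iii) then gives~(b). (Equivalently, one can check directly from $nE=\Sigma+E$ that a set $E(\Sigma,q)$ which is a finite union of closed intervals must be a single closed interval.)

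Part~(a) is the heart of the matter, and here I would follow Kenyon. Iterating $nE=\Sigma+E$ gives, for every $k\ge1$,
\[
n^kE=\Sigma_k+E,\qquad \Sigma_k:=\Big\{\,\textstyle\sum_{j=0}^{k-1}\tau_j\,n^{j}\ :\ \tau_0,\dots,\tau_{k-1}\in\Sigma\,\Big\}.
\]
The hypothesis that $t_0,\dots,t_{n-1}$ form a complete residue system modulo $n$ forces, by induction on $k$ — reduce a putative relation $\sum_j(\tau_j-\tau_j')n^{j}=0$ modulo $n$ to get $\tau_0=\tau_0'$, divide by $n$, and iterate — that $|\Sigma_k|=n^k$ and, more precisely, that $\Sigma_k$ is itself a complete residue system modulo $n^k$. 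Thus $n^kE=\bigcup_{\sigma\in\Sigma_k}(\sigma+E)$ is a union of $n^k$ translates of $E$ by a set meeting every residue class modulo $n^k$ exactly once. Because $\Sigma$ is not an arithmetic progression, $\Sigma_k$ still carries macroscopic gaps, so one cannot conclude that $\Sigma_k+E$ fills $[0,\max\Sigma_k]$; instead, as in Kenyon's argument, I would isolate a block of consecutive translates $\sigma+E$ ($\sigma\in\Sigma_k$) that tiles — up to overlaps of Lebesgue measure zero — a genuine interval, the completeness of $\Sigma_k$ modulo $n^k$ being exactly what makes the overlapping scaled copies $n^{-k}\sigma+n^{-k}E$ abut with no intervening gap along that block. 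Letting $k\to\infty$ then produces an interval inside $E$, and combining (a) and (b) with Theorem~\ref{thm1} completes case~(ii).

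I expect the tiling step in~(a) to be the main obstacle — it is essentially the content of Kenyon's theorem. One must pin down \emph{which} consolidated block of translates $\sigma+E$ does the tiling, and show that residue completeness modulo $n^k$, in spite of the macroscopic gaps in $\Sigma_k$, nonetheless forces those scaled copies to fit together with no gap; controlling the base-$n$ carries across that block is the delicate point. The remaining ingredients — the residue bookkeeping, the reduction through Theorem~\ref{thm1}, part~(b), and case~(i) — I expect to be routine.
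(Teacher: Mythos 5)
The decisive gap is in your part~(b) of case~(ii). Both arguments you offer there --- the claim that a single failure of the Kakeya inequality $x_j\le\sum_{l>j}x_l$ propagates by self-similarity to infinitely many indices, and the parenthetical claim that a set $E(\Sigma,q)$ with $q|\Sigma|=1$ which is a finite union of closed intervals must be a single interval --- are refuted by the paper's own example in the final section: for $(1,8;\tfrac14)$ one has $\Sigma=\{0,1,8,9\}$, $\Delta(\Sigma)=7>\tfrac93$, hence $I(\Sigma)=\tfrac{7}{16}>\tfrac14$ and $E$ is not an interval; yet the non-increasing rearrangement is $(2,\tfrac12,\tfrac14,\tfrac18,\dots)$, the inequality of Proposition~\ref{pro1}(iii) fails only at $j=1$, and $E=[0,1]\cup[2,3]$ is a finite union of two intervals. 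The propagation breaks down because the tail of the rearranged sequence beyond the term $k_iq^{l+1}$ is not $q$ times the tail beyond $k_iq^{l}$: earlier-block terms smaller than $k_iq^{l+1}$ intrude into the later tail and can restore the inequality. Since your argument for (b) uses only ``$\Sigma$ is not an arithmetic progression'' and never the complete-residue hypothesis, while the conclusion of (b) is false without that hypothesis, no repair that ignores the residues can succeed. This is precisely the half of the dichotomy that the paper itself does not re-prove but delegates to Nitecki; nothing in Theorem~\ref{thm7} rules out a nontrivial finite union of intervals.

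Part~(a) you leave explicitly unfinished at the point where the work begins. Your reduction via Lemma~\ref{lem2} (that $\Sigma_k$ is a complete residue system modulo $n^k$, equivalently $|F_k|=|\Sigma|^k$) is correct, but the paper then closes the argument by measure rather than by exhibiting an abutting block of translates: since $F_k\subset \delta q^k\mathbb Z$ and $|F_k|=|\Sigma|^k$, any interval $I$ meets $F_k$ in at most $\lambda(I)\delta^{-1}q^{-k}+1$ points, so $\mu_k(I)\le\lambda(I)/\delta+|\Sigma|^{-k}$, whence $\mu_\infty\le\lambda/\delta$, $\lambda(E)\ge\delta>0$, and the chain \ref{con5}$\Rightarrow$\ref{con10}$\Rightarrow$\ref{con1}$\Rightarrow$\ref{con2} of Theorem~\ref{thm7} (the last step being the density-point argument) produces the interval. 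Your proposed route --- finding a block of consecutive $\sigma+E$, $\sigma\in\Sigma_k$, that tile an interval --- is genuinely harder, since as you note $\Sigma_k$ retains macroscopic gaps and consecutive translates need not abut; the completeness modulo $n^k$ is exploited through separation and counting, not through adjacency. Case~(i) is fine; the endpoint discrepancy you flag is an inconsistency in the paper's own normalisation of the multigeometric sequence, not in your computation.
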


We consider the problem, if this result can be reversed. More precisely, suppose that $(k_1,\dots,k_m;\frac{1}{n})$ is a mutigeometric series such that $n=\vert\Sigma\vert$ and $A(k_1,\dots,k_m;\frac{1}{n})$ has positive Lebesgue measure. Is is true that there is a constant $c>0$ such that $\Sigma\subset c\mathbb Z$ and the set of remainders $t_i<n$ with $\frac{1}{c}\sigma_i\equiv t_i$ equals $\{0,1,\dots,n-1\}$? We will prove that this is true for every prime number $n>2$. We also present an example that it is not true for $n=4$. Finally, we propose a conjecture what condition needs to be added to get full characterization.

\section{Preliminaries} \label{sec2}

We say that a family $\mathcal T$ is an $E$-\textit{tiling} of  $A\subset \mathbb R$, whenever $A\subset \bigcup \mathcal T$ and $\mathcal T$ consists of measure disjoint translations of $E$. If $E$ is bounded, then any $E$-tiling uniquely determines the set $B\subset \mathbb R$ such that
$$\mathcal T=\{\beta+ E:\beta\in B\}.	$$
We shall refer to this set $B$ as the \textit{basis} for the tiling $\mathcal T$, see also \cite{LW}.
We say that $B$ is \textit{periodic}, if there exists $t\neq 0$ such that $B=t+B$. An $E$-tiling is \textit{periodic}, if its basis is periodic.

If $S\subset \mathbb R$ is a set, then denote by $D(S)$ the set of all differences of numbers in $S$, that is,
$$D(S)=\{s-s'\colon s,s'\in S\}.$$
We say that $\delta\in\mathbb R$ is the \textit{common divisor} of the set $S\subset \mathbb R$, whenever $S\subset \delta\mathbb Z$. If the set $S\subset \mathbb R$ has a common divisor, then there exists the greatest common divisor of $S$.

Let $q\in(0,1)$ and   $\Sigma\subset \mathbb R$ be a finite set. For each $n\in\mathbb N$ consider the Borel measure $\nu_n\colon \mathcal B (\mathbb R)\rightarrow [0,1]$ given by the formula
$$\nu_n(B)=\frac{|B\cap (q^n\Sigma )| }{|\Sigma|}=\frac1{|\Sigma|}\sum_{\sigma\in\Sigma}\mathbf{1}_{B}(\sigma q^n),$$ and define measure $\mu_n\colon\mathcal B (\mathbb R)\rightarrow [0,1]$ as the convolution $$\mu_n=\nu_1\ast\dots\ast\nu_n,$$
which is the same as
$$\mu_n(B)=\frac1{|\Sigma|^n}{\sum_{(\sigma_i)\in\Sigma^n}\mathbf{1}_{B}\left(\sum_{i=1}^n\sigma_iq^i\right)}.$$
We also have
$$\mu_{n+1}(B)=\frac1{|\Sigma|}\sum_{\sigma\in\Sigma}\mu_n\left(\frac1q B- \sigma\right).$$
The sequence of measures $(\mu_n)$ converges weakly to some measure $\mu_{\infty}$, which is supported on $E(\Sigma,q)$. We will say that $\mu_{\infty}$ is \textit{associated} with $\Sigma$ and $q$.

We finish this section with some lemmas, which we will need later on. The first one appears to be folklore, but we present the proof for completeness.
\begin{lem} Let $A_1,\ldots, A_n\subset \mathbb R$ be Lebesgue measurable sets of finite measure.
	Then the function $f\colon \mathbb R^n\rightarrow \bbR$ given by the formula
	$$f(x_1,\ldots,x_n)=\lambda\left(\bigcap_{i=1}^n(x_i+ A_i)\right)$$
	is continuous. In particular, for any Lebesgue measurable sets $A,B\subset\bbR$ of finite measure, the function $g\colon \mathbb R^k\rightarrow [0,\infty)$ given by the formula
	$$g(x_1,\ldots, x_k)=\lambda\left(\bigcup_{j=1}^k(x_j+A)\cap B\right)$$
	is continuous.
	\end{lem}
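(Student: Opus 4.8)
The plan is to reduce the $n$-dimensional statement to a single clean estimate on how the measure of a finite-measure set changes under translation, and then bootstrap. First I would recall the standard continuity-of-translation fact: for a Lebesgue measurable set $A\subset\bbR$ with $\lambda(A)<\infty$, the map $t\mapsto\lambda\bigl((t+A)\triangle A\bigr)$ is continuous at $0$ (hence everywhere, by writing $t+A=s+((t-s)+A)$). This is proved by approximating $\mathbf 1_A$ in $L^1$ by a continuous compactly supported function $\varphi$, using that translation is an $L^1$-isometry and that $\|\varphi(\cdot-t)-\varphi\|_1\to0$ by uniform continuity and dominated convergence; then $\lambda((t+A)\triangle A)=\|\mathbf 1_A(\cdot-t)-\mathbf 1_A\|_1$ is controlled. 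I would state this as the one nontrivial input.

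Next I would prove the $n$-variable claim. Fix $(x_1,\dots,x_n)$ and a perturbation $(y_1,\dots,y_n)$. Write $f(x+y)-f(x)=\lambda\bigl(\bigcap_i(x_i+y_i+A_i)\bigr)-\lambda\bigl(\bigcap_i(x_i+A_i)\bigr)$, and bound the difference of the measures of two sets by the measure of their symmetric difference. A telescoping argument handles the intersection: if $C=\bigcap_i C_i$ and $C'=\bigcap_i C_i'$ then $C\triangle C'\subset\bigcup_i(C_i\triangle C_i')$ — here $C_i=x_i+A_i$ and $C_i'=x_i+y_i+A_i$, so $\lambda(C_i\triangle C_i')=\lambda\bigl((y_i+A_i)\triangle A_i\bigr)$, which tends to $0$ as $y_i\to0$ by the first paragraph. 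Summing over $i$ gives $|f(x+y)-f(x)|\leq\sum_{i=1}^n\lambda\bigl((y_i+A_i)\triangle A_i\bigr)\to0$, which is continuity of $f$ (indeed uniform continuity).

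For the ``in particular'' statement about $g$, I would observe that $g$ is a finite inclusion–exclusion combination of functions of the first type. Concretely, $\lambda\Bigl(\bigl(\bigcup_{j=1}^k(x_j+A)\bigr)\cap B\Bigr)=\sum_{\emptyset\neq J\subset\{1,\dots,k\}}(-1)^{|J|+1}\lambda\Bigl(\bigcap_{j\in J}(x_j+A)\cap B\Bigr)$, and each summand is $f$ evaluated (with $|J|+1$ sets, the last being $B$ translated by the fixed constant $0$, i.e. one sets the final variable to a constant and regards the rest as the variables $x_j$, $j\in J$), hence continuous in $(x_1,\dots,x_k)$; a finite sum of continuous functions is continuous.

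The only genuine obstacle is the $L^1$-continuity of translation in the first paragraph, and even that is classical; everything after it is bookkeeping with symmetric differences and inclusion–exclusion. One minor point to get right is that $A_i$ need not be bounded (only of finite measure), so one should not try to invoke compactness of supports for the $A_i$ themselves — but the $L^1$-approximation argument does not need it.
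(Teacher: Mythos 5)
Your proof is correct, but it takes a genuinely different route from the paper's. The paper builds continuity of $f$ from the ground up: first an explicit formula for the measure of an intersection of translated open intervals, then an induction replacing the intervals by general open sets one at a time (writing each open set as a disjoint union of intervals and using a Weierstrass-type dominated convergence to keep the sum of continuous functions continuous), and finally an outer approximation of each measurable $A_i$ by an open $U_i$ with $\lambda(U_i\setminus A_i)$ small, closed by a three-epsilon estimate. You instead invoke the classical $L^1$-continuity of translation, $\lambda\bigl((t+A)\triangle A\bigr)\to 0$ as $t\to 0$, and reduce everything to the elementary inclusion $\bigl(\bigcap_i C_i\bigr)\triangle\bigl(\bigcap_i C_i'\bigr)\subset\bigcup_i\bigl(C_i\triangle C_i'\bigr)$, which yields the clean bound $|f(x+y)-f(x)|\leq\sum_{i=1}^n\lambda\bigl((y_i+A_i)\triangle A_i\bigr)$ and in fact uniform continuity --- slightly more than the paper records. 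The two arguments have comparable depth (the $L^1$-continuity of translation is itself proved by the same kind of regularity/approximation the paper deploys directly), but yours is shorter and sidesteps the induction over open sets; the paper's is more self-contained in that it never leaves the language of measures of sets. For the ``in particular'' statement both you and the paper appeal to inclusion--exclusion; your observation that each summand is an instance of $f$ with one additional set $B$ whose translation variable is frozen at $0$ makes explicit a step the paper leaves to the reader. Your closing caveat that the $A_i$ are only of finite measure, not bounded, is well taken and correctly handled by the $L^1$-approximation.
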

	\begin{proof}
		If the sets $A_i$ are open intervals, then
		$$\lambda\left(\bigcap_{i=1}^n(x_i+ A_i)\right)=\max\left\{0, \min_{1\leq i\leq n} \left(x_i+\sup A_i\right)-\max_{1\leq i\leq n}\left( x_i+\inf A_i\right) \right\},$$
		and it is a continuous function.
		
		Now we show by induction on $m = 0,1,\ldots, n$ that
		if $A_1, \ldots, A_m$ are open and $A_{m+1},\ldots, A_n$ are open intervals, then $f$ is continuous. Fix $m$ and assume that the hypothesis holds for $m-1$. Then
		$A_m=\bigcup_{k=1}^\infty I_k^m$ for some pairwise disjoint open intervals $I_k^m$ and
		$$\lambda\left(\bigcap_{i=1}^n(x_i+ A_i)\right)=\sum_{k=1}^\infty \lambda\left( \bigcap_{i=1}^{m-1}(x_i+A_i)\cap (x_m+ I_k^m)\cap \bigcap_{i=m+1}^n\left(x_i+A_i\right)\right).$$
		Hence $f$ is a sum of a series of functions that are continuous by induction hypothesis and the series in uniformly convergent by Weierstrass criterion ($\sum_{k=1}^\infty \lambda(I^m_k)$ is the dominating convergent series). This finishes the induction proof and hence we showed that $f$ is continuous, whenever $A_1,\ldots, A_n$ are open sets.
		
		For the general case, fix measurable sets $A_1,\ldots, A_n\subset \mathbb R$ of finite measure, a point $x^0=(x_1^0,\ldots, x_n^0)\in\bbR^n$ and $\varepsilon>0$. Take open sets $U_i\supset A_i$ such that $\lambda(U_i\setminus A_i)<\frac\varepsilon{3n}$. For all $(x_1,\ldots, x_n)\in\bbR^n$ we have
		$$\bigcap_{i=1}^n(x_i+U_i)\subset \bigcap_{i=1}^n(x_i+A_i)\cup \bigcup_{i=1}^n\left(x_i+U_i\setminus A_i\right)$$
		and hence
		$$\lambda\left(\bigcap_{i=1}^n(x_i+U_i)\right)-\lambda\left(\bigcap_{i=1}^n(x_i+A_i)\right)\leq \sum_{i=1}^n \lambda(U_i\setminus A_i)< \frac\varepsilon3 .$$
		For all $x\in \bbR^n$ sufficiently close to $x^0$ we have
		\begin{multline*}
			|f(x^0)-f(x)|\leq \left|\lambda\left(\bigcap_{i=1}^n(x_i^0+A_i)\right)-\lambda\left(\bigcap_{i=1}^n(x_i^0+U_i)\right)\right| +\\+\left|\lambda\left(\bigcap_{i=1}^n(x_i^0+U_i)\right)-\lambda\left(\bigcap_{i=1}^n(x_i+U_i)\right)\right|+\\+\left|\lambda\left(\bigcap_{i=1}^n(x_i+U_i)\right)-\lambda\left(\bigcap_{i=1}^n(x_i+A_i)\right)\right|<\frac\varepsilon3+\frac\varepsilon3+\frac\varepsilon3=\varepsilon.
		\end{multline*}
The additional part is a consequence of the inclusion-exclusion principle.
		\end{proof}
If $f\colon X\rightarrow Y$ is a measurable function between measure spaces $(X,\mathcal A, \mu)$ and $(Y,\mathcal B, \nu)$   such that $\nu(B)=\mu(f^{-1}[B])$ for all $B\in \mathcal B$, then, as usual,  $\nu$ is called the \textit{pushforward measure} and we denote it by $f\#\mu$.

Let $(X,\mathcal A, \mu)$ be a measure space. The measurable map $T\colon X\rightarrow X$ is \textit{measure-preserving}, if  $\mu=T\#\mu$. A measure-preserving map $T\colon X\rightarrow X$ is \textit{ergodic}, if for any $A\in\mathcal A$,
$$T^{-1}[A]=A \implies \mu(A)\in\{0,1\}.$$

\begin{lem}
    \label{lem3} Let $X$ be compact Hausdorff space and let $\mu$, $\nu$ be Borel probability measures on $X$. If there exists a map $T\colon X\rightarrow X$ that is measure-preserving and ergodic with respect to both $\mu$ and $\nu$, then $\mu=\nu$.
\end{lem}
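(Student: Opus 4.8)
The plan is to reduce the equality $\mu=\nu$ to the equality of the integrals $\int_X f\,d\mu=\int_X f\,d\nu$ over all $f\in C(X)$, and to extract these from the common dynamics of $T$ by Birkhoff's pointwise ergodic theorem. One uses here that a Borel probability measure on a compact Hausdorff space is Radon (automatic when $X$ is metrizable), so that by the Riesz representation theorem it is determined by the positive functional $f\mapsto\int_X f\,d\mu$ on $C(X)$. The first step is to fix $f\in C(X)$ and apply Birkhoff's theorem to the measure-preserving system $(X,\mathcal B(X),\mu,T)$: since $T$ is ergodic with respect to $\mu$, the averages $\frac1N\sum_{n=0}^{N-1}f(T^{n}x)$ converge, for $\mu$-almost every $x$, to the constant $\int_X f\,d\mu$. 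Applying the same theorem to $(X,\mathcal B(X),\nu,T)$, these same averages converge, for $\nu$-almost every $x$, to $\int_X f\,d\nu$.

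The second step removes the dependence on $f$. Choosing a countable uniformly dense family $\{f_k\}_{k\in\N}\subset C(X)$ and using that the set of points at which the averages of a given integrand converge to its $\mu$-mean is closed under $\mathbb Q$-linear combinations of the integrands and under uniform approximation of the integrand, one obtains a single $\mu$-conull Borel set $G_\mu$ such that $$\lim_{N\to\infty}\frac1N\sum_{n=0}^{N-1}f(T^{n}x)=\int_X f\,d\mu\qquad\text{for every }x\in G_\mu\text{ and every }f\in C(X),$$ and, symmetrically, a $\nu$-conull Borel set $G_\nu$ with the analogous property relative to $\nu$. If one can exhibit a single point $x_0\in G_\mu\cap G_\nu$, then both $\int_X f\,d\mu$ and $\int_X f\,d\nu$ equal $\lim_{N\to\infty}\frac1N\sum_{n=0}^{N-1}f(T^{n}x_0)$ and hence each other; since $f\in C(X)$ was arbitrary, $\mu=\nu$ by the Riesz representation theorem.

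The only nontrivial point, and the one I expect to be the obstacle, is the nonemptiness of $G_\mu\cap G_\nu$: $G_\mu$ is merely $\mu$-conull and $G_\nu$ merely $\nu$-conull, and in general these can be disjoint (already the identity map on a two-point space is measure-preserving and ergodic for two distinct Dirac masses), so the statement needs to be supplemented by the hypothesis --- surely the intended one, and the one available in the applications --- that $\mu$ and $\nu$ are not mutually singular. Granting it, the argument closes at once: if $G_\mu\cap G_\nu=\emptyset$ then $G_\nu\subset X\setminus G_\mu$, so $1=\nu(G_\nu)\leq\nu(X\setminus G_\mu)$, giving $\nu(X\setminus G_\mu)=1$ while $\mu(X\setminus G_\mu)=0$, i.e.\ $\mu\perp\nu$, a contradiction. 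I would also record the slicker route available under the stronger assumption $\nu\ll\mu$: the Radon--Nikodym density $h=\tfrac{d\nu}{d\mu}$ then satisfies $h\circ T=h$ $\mu$-almost everywhere (because $T\#\mu=\mu$ and $T\#\nu=\nu$), ergodicity of $(\mu,T)$ forces $h$ to be $\mu$-a.e.\ constant, and $\int_X h\,d\mu=\nu(X)=1$ yields $h\equiv 1$, that is $\mu=\nu$ --- bypassing Birkhoff's theorem altogether.
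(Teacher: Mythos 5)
You have put your finger on a genuine defect: the lemma as printed is false, and your two-point counterexample (the identity map is measure-preserving and ergodic for both $\delta_0$ and $\delta_1$, yet $\delta_0\neq\delta_1$) refutes it. Indeed, the standard fact is that two \emph{distinct} ergodic measures for the same transformation are always mutually \emph{singular}, so the conclusion fails as badly as possible whenever $\mu\neq\nu$. The paper's entire proof is the single display
$$\int_Xf\,\mathrm{d}\mu=\lim_{n\to\infty}\frac1n\sum_{j=0}^{n-1}f(T^j(x))=\int_Xf\,\mathrm{d}\nu\quad\text{``almost everywhere'',}$$
which is exactly your first step, and the gap is exactly where you locate it: the left-hand equality holds for $\mu$-a.e.\ $x$, the right-hand one for $\nu$-a.e.\ $x$, and nothing produces a common such $x$. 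Your repair --- adding the hypothesis $\mu\not\perp\nu$, building conull sets $G_\mu$, $G_\nu$ that work simultaneously for a countable uniformly dense subset of $C(X)$, and intersecting them --- is correct and recovers the true statement; note only that it uses separability of $C(X)$ (equivalently, metrizability of $X$) and the Riesz/Radon identification of measures with functionals, both of which can fail for general compact Hausdorff spaces but are harmless here since the lemma is applied only to $X=[0,1]/\{0,1\}$.

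Your closing remark is also the most economical way to salvage the paper: in the sole application, the implication \ref{con10}$\Rightarrow$\ref{con7} of Theorem \ref{thm7}, the lemma is invoked with $\mu=\lambda$ on $[0,1)$ and $\nu=f_\delta\#\mu_\infty$, where $\nu\ll\lambda$ has already been established. So the corollary ``a $T$-invariant probability measure absolutely continuous with respect to an ergodic one coincides with it'' suffices, whether proved by your Radon--Nikodym argument (for non-invertible $T$ the identity $h\circ T=h$ $\mu$-a.e.\ requires a one-line justification, or one argues directly that the level sets $\{h<1\}$ and $\{h>1\}$ are invariant mod $\mu$ and hence null) or by the Birkhoff argument restricted to the absolutely continuous case. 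Either way the downstream results of the paper are unaffected; only the statement of the lemma needs the extra hypothesis.
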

\begin{proof}
    For any continuous function $f\colon X\rightarrow \bbR$, by Birkhoff Ergodic Theorem, see \cite[Theorem 2.30]{EinsiedlerWard}, $$\int_Xf\,\mathrm{d}\mu=\lim_{n\to\infty}\frac1n\sum_{j=0}^{n-1}f(T^j(x))=\int_Xf\,\mathrm{d}\nu$$ almost everywhere.
\end{proof}
\begin{lem}\label{lem4}
    Let $\mu$ and $\nu$ be Borel probability measures on $[0,1)$. If for all $k\in\mathbb Z$,
    $$\int_0^1\exp(2\pi i k s)\mathrm d \, \mu (s)=\int_0^1\exp(2\pi i k s)\mathrm d \, \nu (s)$$
    (i.e. all respective Fourier coefficients are equal), then $\mu=\nu$.
\end{lem}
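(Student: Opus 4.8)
The plan is to reduce the statement to the classical fact that a Borel probability measure on a compact metric space is determined by its integrals against any uniformly dense family of continuous functions. First I would identify $[0,1)$ with the circle group $\mathbb{T}=\bbR/\mathbb{Z}$ via the Borel isomorphism $s\mapsto e^{2\pi i s}$, and let $\tilde\mu,\tilde\nu$ be the pushforwards of $\mu,\nu$ to $\mathbb{T}$; these are Borel probability measures on a compact metric space, and the hypothesis becomes $\int_{\mathbb{T}}z^k\,\mathrm d\tilde\mu(z)=\int_{\mathbb{T}}z^k\,\mathrm d\tilde\nu(z)$ for every $k\in\mathbb{Z}$.

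Next I would apply the Stone--Weierstrass theorem. The set of trigonometric polynomials $s\mapsto\sum_{|k|\leq N}c_k e^{2\pi i k s}$ is a subalgebra of $C(\mathbb{T})$ that contains the constants, is closed under complex conjugation (since $\overline{e^{2\pi i k s}}=e^{2\pi i(-k)s}$), and separates points of $\mathbb{T}$; hence it is uniformly dense in $C(\mathbb{T})$. By linearity the hypothesis gives $\int f\,\mathrm d\tilde\mu=\int f\,\mathrm d\tilde\nu$ for every trigonometric polynomial $f$, and since both measures are probability measures, a straightforward uniform-approximation estimate extends this equality to all $f\in C(\mathbb{T})$.

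Finally, as $\mathbb{T}$ is a compact metric space, the Riesz representation theorem (equivalently, regularity of finite Borel measures on a Polish space) shows that a Borel probability measure on $\mathbb{T}$ is uniquely determined by the functional $f\mapsto\int f\,\mathrm d\mu$ on $C(\mathbb{T})$; therefore $\tilde\mu=\tilde\nu$, and pulling back along $s\mapsto e^{2\pi i s}$ gives $\mu=\nu$. I do not expect a genuine obstacle here, since this is a standard uniqueness theorem for Fourier--Stieltjes coefficients; the only point requiring a little care is that $[0,1)$ itself is not compact, which is exactly why one first transfers everything to the circle before invoking Stone--Weierstrass and Riesz.
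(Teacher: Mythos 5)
Your proposal is correct and follows essentially the same route as the paper: transfer to the circle $[0,1]/\{0,1\}$, invoke Stone--Weierstrass to get uniform density of trigonometric polynomials, pass the equality of integrals to all continuous functions by uniform convergence, and conclude by uniqueness of Borel measures determined by their action on $C(\mathbb{T})$. The only difference is that you spell out the final Riesz-representation step, which the paper leaves implicit.
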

\begin{proof}
    It follows from the Stone-Weierstrass theorem that the set of linear combinations of $\{e_n\colon n\in \mathbb Z\}$, where $e_n(x)=\exp(2\pi i n x)$, is dense in the space of complex valued continuous functions on $X=[0,1]/\{0,1\}$ with supremum norm. Here $[0,1]/\{0,1\}$ denotes the quotient space of $[0,1]$ with $0$ and $1$ identified. Hence for any continuous function $f\colon X\rightarrow \mathbb C$, there exists a sequence $f_n\in \operatorname{lin}\{e_n\colon n\in \mathbb Z\}$ such that $(f_n)$ uniformly converges to $f$, and consequently
\begin{multline*}
\int_Xf\mathrm{d}\,\mu=\int_X\lim_{n\to\infty}f_n\mathrm{d}\,\mu=\lim_{n\to\infty}\int_Xf_n\mathrm{d}\,\mu=\\=\lim_{n\to\infty}\int_Xf_n\mathrm{d}\,\nu=\int_X\lim_{n\to\infty}f_n\mathrm{d}\,\nu=\int_Xf\mathrm{d}\,\nu.
\end{multline*}
\end{proof}

\begin{lem}[\cite{Nitecki}]\label{lem2}
    Assume $S\subset \mathbb Z$ is a finite set of cardinality $m$ such that the remainders modulo $m$ of numbers in $S$ are pairwise distinct. Given two sequences $(\sigma_1,\ldots, \sigma_n), (\sigma_1',\ldots, \sigma_n') \in S^n$, if the number
    $$\sum_{i=1}^n\frac{\sigma_i}{m^i}-\sum_{i=1}^n\frac{\sigma_i'}{m^i}$$ is an integer, then $\sigma_i=\sigma_i'$ for all $i\in\{1,\ldots,n\}$.
\end{lem}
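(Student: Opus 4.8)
The plan is to clear denominators and then strip off the coefficients one index at a time, starting from the last index $n$ and working down to $1$, using the distinct-remainder hypothesis at each step. Write $d_i=\sigma_i-\sigma_i'$ for $i=1,\dots,n$, so that each $d_i$ is a difference of two elements of $S$. Multiplying the assumed integer $\sum_{i=1}^n d_i m^{-i}$ by $m^n$ shows that
$$N:=\sum_{i=1}^n d_i m^{n-i}=d_1 m^{n-1}+\dots+d_{n-1}m+d_n$$
is an integer divisible by $m^n$.

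Next I would prove, by downward induction on $j=n,n-1,\dots,1$, that $d_j=0$, carrying along the invariant that $\sum_{i=1}^{j} d_i m^{j-i}$ is divisible by $m^{j}$. The base case $j=n$ is exactly the divisibility of $N$ by $m^n$ just established. Assume the invariant holds at level $j$. Reducing $\sum_{i=1}^{j} d_i m^{j-i}$ modulo $m$, every term with $i<j$ carries a factor of $m$, so the whole sum is congruent to $d_j$ modulo $m$; since it is divisible by $m^j$ and $j\geq 1$, it is divisible by $m$, hence $d_j\equiv 0\pmod m$. Now $d_j=\sigma_j-\sigma_j'$ with $\sigma_j,\sigma_j'\in S$, and the hypothesis that the elements of $S$ have pairwise distinct remainders modulo $m$ forces $\sigma_j=\sigma_j'$, i.e.\ $d_j=0$. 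Discarding this vanished term, $\sum_{i=1}^{j-1} d_i m^{j-i}=m\sum_{i=1}^{j-1} d_i m^{(j-1)-i}$ is still divisible by $m^{j}$, so $\sum_{i=1}^{j-1} d_i m^{(j-1)-i}$ is divisible by $m^{j-1}$, which is the invariant at level $j-1$. The induction therefore runs down to $j=1$ and gives $\sigma_i=\sigma_i'$ for every $i$.

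There is essentially no hard part here; the computation is elementary once the bookkeeping is set up correctly. The only place the hypothesis is used — and the only place finiteness of $S$ with $|S|=m$ together with the distinct-remainders condition is indispensable — is the passage from the congruence $d_j\equiv 0 \pmod m$ to the equality $d_j=0$. (Without it, already $m=n=2$, $S=\{0,2\}$, $\sigma_1=2$, $\sigma_1'=0$ violates the conclusion, since $\tfrac{2}{2}-\tfrac{0}{2}=1\in\mathbb Z$.) If one prefers, the same argument can be organized as an induction on $n$: once $d_n=0$, the remaining sum $\sum_{i=1}^{n-1} d_i m^{-i}$ is still an integer and comes from a pair of sequences in $S^{n-1}$.
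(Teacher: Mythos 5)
Your proof is correct and is essentially the same digit-stripping argument as the paper's: both reduce to extracting one coefficient at a time modulo $m$ and using the distinct-remainders hypothesis to upgrade $\sigma_j\equiv\sigma_j'\pmod m$ to equality, the only difference being that you run an explicit downward induction on the index after clearing denominators while the paper phrases it as an induction on the length $n$. No gaps.
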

\begin{proof}
    The proof is by induction on $n$. Fix $n\in\mathbb N$ and assume the hypothesis holds for $n-1$. If $$\sum_{i=1}^n\frac{\sigma_i}{m^i}-\sum_{i=1}^n\frac{\sigma_i'}{m^i}\in\mathbb Z,$$ then $$\sigma_1-\sigma_1'+\sum_{i=2}^n\frac{\sigma_i}{m^i}-\sum_{i=2}^n\frac{\sigma_i'}{m^i} \in m\mathbb Z.$$ By induction hypothesis $(\sigma_2,\ldots,\sigma_n)=(\sigma_2',\ldots,\sigma_n')$, hence $m|(\sigma_1-\sigma_1')$ and, since the remainders modulo $m$ in $S$ are distinct, $\sigma_1=\sigma_1'$.
\end{proof}

\section{Generalization of Kenyon's theorem}
\label{sec3}
If $q\in(0,1)$ and $\Sigma\subset \mathbb R$ is a finite set, then (along with the definition of $E(\Sigma, q)$) we also define
 $$F_n(\Sigma, q):=\left\{\sum_{i=1}^{n}\sigma_iq^i\colon (\sigma_i)\in\Sigma^n\right\} \text{ for } n\in\mathbb N.$$
\begin{thm}\label{thm7}
	Let $q\in(0,1)$ and $\Sigma\subset \mathbb R$ be a finite set such that $|\Sigma| q=1$. Denote $E=E(\Sigma,q)$, $F_n=F_n(\Sigma,q)$ and let $\mu_{\infty}$ be the associated Borel measure. Then the following conditions are equivalent.
	\begin{enumerate}[label=(\roman*)]
    \itemsep2pt
		\item \label{con1} $E$ has positive Lebesgue measure.
		\item \label{con2} $E$ contains an interval.
		\item \label{con3} $E$ is regularly closed.
		\item \label{con4} There exists a countable periodic $E$-tiling $\mathcal T$ of $\mathbb R$ and $n\in\mathbb N$ such that $d q^{-n}$ is a period of $\mathcal T$ for all $d \in D(\Sigma)$.
		\item \label{con5} $D(\Sigma)$ has a common divisor and $|F_n|=|\Sigma|^n$ for all $n\in\mathbb N$.
		\item \label{con6} $D(\Sigma)$ has a common divisor and $E$ has Hausdorff dimension $1$.
    \item \label{con10} 
    $\mu_\infty$ is absolutely continuous with respect to Lebesgue measure.
        \item \label{con7} There exists $\delta>0$ such that $f_\delta \# \mu_\infty$ is the Lebesgue measure on $\mathcal B[0,1)$, where $f_\delta(x)= \frac x\delta - \lfloor\frac x\delta\rfloor$ for $x\in\mathbb R$.
		\item \label{con8} There exists $\delta>0$ such that for all $n\in \mathbb N$ we have  
  $$\int_{\mathbb R}\exp\left(\frac{2\pi i n s}{ \delta}\right)\mathrm{d}\mu_\infty(s)=0.$$
		\item \label{con9} There exists $\delta>0$ such that for any $n\in\mathbb N$ there exists $k\in\mathbb N$ such that  $$\sum_{\sigma\in\Sigma}\exp\left(\frac{2\pi i 
 n \sigma q^k}{ \delta}\right)=0.$$
	\end{enumerate}
 Moreover, if the above conditions hold, then \begin{equation}\label{eq2}\mu_\infty(B)=\frac{\lambda(B\cap E)}{\lambda(E)} \text{ for all }B\in\mathcal B(\mathbb R).\end{equation}
\end{thm}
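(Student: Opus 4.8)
The plan is to prove the eleven conditions equivalent by a web of implications, reading off the density formula \eqref{eq2} from the strongest one. Write $m=|\Sigma|=1/q$. Two structural facts are used throughout: the self-similar identity $E=\bigcup_{\sigma\in\Sigma}(q\sigma+qE)$, and the invariance of every one of the conditions under an affine change of variable $\Sigma\mapsto a\Sigma+b$ (which turns $E$ into an affine image of itself); hence, once $D(\Sigma)$ is known to have a common divisor, we may normalise $0\in\Sigma\subset\mathbb Z$ with $\gcd D(\Sigma)=1$, so that $\delta=1$ is the natural scale for \ref{con7}--\ref{con9} and $\mathbb Z$ the natural group of periods for \ref{con4}. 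The cheap implications are disposed of first: \ref{con2}$\Rightarrow$\ref{con1}, \ref{con3}$\Rightarrow$\ref{con1}, \ref{con4}$\Rightarrow$\ref{con1}, \ref{con10}$\Rightarrow$\ref{con1} and \ref{con7}$\Rightarrow$\ref{con10} (a nonempty regularly closed set has interior; $\mathbb R$ is not a countable union of null sets; $\mu_\infty(E)=1$; a measurable summand of a Lebesgue measure is absolutely continuous).

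\textbf{The measure engine.} The observation that drives everything is that \ref{con1} already forces rigidity. From $\lambda(E)=\lambda\big(\bigcup_{\alpha\in F_n}(\alpha+q^nE)\big)\le|F_n|\,q^n\lambda(E)$ together with $|F_n|\le m^n$ one gets $|F_n|=m^n$ for all $n$ and, simultaneously, that the $m^n$ level-$n$ cylinders are pairwise measure-disjoint — the cheap half of \ref{con5}. Writing $H_n:=q^{-n}F_n$ (so $|H_n|=m^n$), the identity $q^{-n}E=H_n+E$ holds as a genuine sumset, and the measure-disjointness of the cylinders makes this sum measure-disjoint; counting points of $H_n$ in dilated intervals — the number of points of $H_n$ in any window of length $\diam E$ is at most $2\,\diam E/\lambda(E)$, so boundary effects are $O(1)$ — gives $\mu_n([a,b])\to\lambda(E\cap[a,b])/\lambda(E)$ for all $a<b$, hence $\mu_n\to\nu$ weakly where $\nu:=\lambda(\cdot\cap E)/\lambda(E)$. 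Therefore $\mu_\infty=\nu$, which is at once \ref{con10} and the formula \eqref{eq2}. (Alternatively, one can run the ergodic route: the shift $T\colon\sum_i\sigma_iq^i\mapsto\sum_i\sigma_{i+1}q^i$ is measure-preserving and ergodic for both $\mu_\infty$ and $\nu$, so Lemma \ref{lem3} applies — at the price of verifying that the set of points with more than one $\Sigma$-expansion is null.) The topological block \ref{con1}$\Rightarrow$\ref{con3}$\Rightarrow$\ref{con2} now follows: self-similarity makes $\inte E$, if nonempty, dense in $E$; and measure-disjointness forbids overlapping cylinder hulls, so tracking the largest gap $\Delta(E)$ through one step of the construction (it can only shrink unless there is slack, i.e. interior) forces $\inte E\neq\emptyset$ or $E$ an interval.

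\textbf{The arithmetic bridge and the Fourier chain.} The substantial remaining task is to manufacture, out of positive measure, both the common divisor and a periodic $E$-tiling with the prescribed periods $dq^{-n}$, $d\in D(\Sigma)$; this is the generalisation of Kenyon's argument. Granting it, \ref{con4} is explicit, \ref{con5} and \ref{con6} become meaningful, $\lambda(E)\in\delta\mathbb N$, and consequently $f_\delta\#\mu_\infty=\lambda|_{[0,1)}$, which is \ref{con7}. The harmonic-analytic chain then closes easily: \ref{con7}$\Leftrightarrow$\ref{con8} is Lemma \ref{lem4} applied to $f_\delta\#\mu_\infty$, whose nonzero Fourier coefficients are precisely the numbers $\int_{\mathbb R}\exp(2\pi ins/\delta)\,d\mu_\infty(s)$; and \ref{con8}$\Leftrightarrow$\ref{con9} because $\int_{\mathbb R}\exp(2\pi ins/\delta)\,d\mu_\infty(s)=\prod_{k\ge1}\tfrac1m\sum_{\sigma\in\Sigma}\exp(2\pi in\sigma q^k/\delta)$ is a convergent infinite product whose factors tend to $1$ like $O(q^{2k})$, so it vanishes iff a single factor does. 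Running the tiling construction backwards yields \ref{con4}$\Rightarrow$\ref{con2} and \ref{con5}$\Rightarrow$\ref{con4}, and \ref{con5}$\Leftrightarrow$\ref{con6} holds because $E$ has similarity dimension $1$: a single defect $|F_n|<m^n$ forces $\overline{\dim}_B E<1$ (via submultiplicativity $|F_{kn}|\le|F_n|^k$ and the covering of $E$ by $|F_{kn}|$ intervals of length $q^{kn}\diam E$), while $|F_n|=m^n$ for all $n$ together with a common divisor pins $\dim_H E=1$.

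\textbf{The main obstacle.} The genuinely hard step is exactly the arithmetic bridge: passing from the purely measure-theoretic fact that the cylinders of $E$ do not overlap to the rigid combinatorial statement that $D(\Sigma)$ has a common divisor and that $E$ tiles $\mathbb R$ periodically. The soft arguments above deliver $\mu_\infty=\nu$ and \eqref{eq2} for nothing, but extracting a global periodic structure — via a pigeonhole-and-periodicity argument on the growing translation sets $q^{-n}F_n$, equivalently showing that the self-similar density $\mathbf 1_E/\lambda(E)$, which satisfies $\mathbf 1_E(x)=\sum_{\sigma\in\Sigma}\mathbf 1_E(mx-\sigma)$ almost everywhere, must be rational in a suitable sense — is the crux, and is also the point at which the primality of $|\Sigma|$ will later be needed to identify which $\Sigma$ occur. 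A secondary technical nuisance, if one prefers the ergodic proof of \eqref{eq2}, is showing the multiple-expansion set is null.
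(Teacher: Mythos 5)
Your global architecture is reasonable, and one piece is genuinely attractive: deriving $\mu_\infty=\lambda(\cdot\cap E)/\lambda(E)$ (hence (vii) and the formula \eqref{eq2}) directly from (i) by counting points of $q^{-n}F_n$ in dilated windows, using only the measure-disjointness of the level-$n$ cylinders, is more economical than the paper's route (the paper reaches absolute continuity from (v) and proves \eqref{eq2} only at the very end, using the tiling built in its Claim~4). However, the proposal leaves the two genuinely hard steps of the theorem unproved. First, (i)$\Rightarrow$(ii): your ``track the largest gap'' argument does not work. If $G$ is a gap of $E$ of maximal length $\Delta(E)$, then, since each first-level piece $q\sigma+qE$ is contained in $E$ and has gaps of length at most $q\Delta(E)<\Delta(E)$, the only conclusion available is that every piece lies entirely to one side of $G$; iterating reproduces the same dichotomy and never contradicts ``positive measure, empty interior.'' A fat-Cantor-type configuration is not excluded by gap bookkeeping. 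This implication is a real theorem (it is Schief's separation result, which the paper cites as an alternative proof), and the paper's argument requires substance: a Lebesgue density point of $E$, the uniform bound (depending on $\lambda(E)>0$) on the number of measure-disjoint translates of $E$ meeting a fixed window, compactness of $[a,b]^k$ to extract a limiting finite configuration $U$, and the continuity Lemma~1 to pass to the limit and conclude $[-1,1]\subset U+E$, whence Baire category gives $\inte E\neq\emptyset$. Some version of this is indispensable.

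Second, by your own admission the ``arithmetic bridge'' --- the construction of a periodic $E$-tiling of $\mathbb R$ whose period group contains $dq^{-n}$ for all $d\in D(\Sigma)$, and hence the existence of a common divisor of $D(\Sigma)$ --- is identified as the crux but not carried out. This is the core of the theorem and of the paper's proof (Claims~1--4: extend a finite $E$-tiling containing $v+q^{-m}F_m$ over any bounded interval; use regular closedness to prove that tilings of $\mathbb R$ extending a tiling of a long enough interval are unique; glue over $M\to\infty$; then compare the unique tilings $B_v$ for $v$ ranging over a $q^{-n}\Sigma$-translate to read off the periods $(\sigma-\sigma')q^{-n}$; finally note the period group is discrete, so $D(\Sigma)\subset\delta q^n\mathbb Z$). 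Without it, conditions (iv)--(ix) are not linked to (i)--(iii), and several of your remaining assertions are left as claims: (v)$\Rightarrow$(iv), (iv)$\Rightarrow$(ii) ``by running the construction backwards,'' (v) pinning $\dim_H E=1$, and ``$\lambda(E)\in\delta\mathbb N$, consequently $f_\delta\#\mu_\infty=\lambda|_{[0,1)}$'' (the latter needs either the ergodic argument of the paper via Lemma~\ref{lem3} or an actual proof that $E$ tiles by $\delta\mathbb Z$ with constant multiplicity). A minor point: in (viii)$\Leftrightarrow$(ix) the factors of the infinite product deviate from $1$ by $O(q^{k})$, not $O(q^{2k})$, unless $\sum_{\sigma\in\Sigma}\sigma=0$; summability still holds, so that conclusion survives.
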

\begin{proof} First we note that if $\lambda(E)>0$, then $|F_n|=|\Sigma|^n$ and the sets $v+q^n E$ are Lebesgue measure disjoint for distinct $v\in F_n$. Indeed, we have $E=F_n +q^n E$ and hence \begin{multline*}
		\lambda(E)=\lambda\left(\bigcup_{v\in F_n}(v+q^n E)\right)\leqslant \sum_{v\in F_n}\lambda(v+q^n E)=\\=|F_n| q^n\lambda(E)\leqslant|\Sigma|^n  q^n\lambda(E)= \lambda(E).	\end{multline*}
	\begin{enumerate}
		\item[\ref{con1}$\Rightarrow$\ref{con2}.] Let $x\in E$ be a point of density for Lebesgue measure. For each $n\in\mathbb N$ consider the sets
		$$V_n=\left\{v\in  F_n\colon \left(vq^{-n} + E\right)\cap \left[xq^{-n}-1, xq^{-n}+1\right]\neq\emptyset\right\},$$
		$$U_n=q^{-n}(V_n-x).$$
If $u=q^{-n}(v-x)\in U_n$, then $(u+E)\cap [-1,1]\neq \emptyset$. Hence $$U_n+E\subset [-1-\mathrm{diam}\, E, 1+\mathrm{diam}\, E] \text{ and}$$
$$U_n\subset [-1-\mathrm{diam}\, E, 1+\mathrm{diam}\, E]+\min E=:[a,b].$$
		
		Since $u+E$ are measure disjoint for distinct $u\in U_n$, so $|U_n|\leqslant  k$, where $k=\lceil(2+2\mathrm{diam}\,E)/\lambda(E)\rceil$. It is important that $k$ is independent of $n$.
		
		Consider $u^n\in [a,b]^k$ such that $U_n=\{u^n_1,\ldots, u^n_k\}$ for all $n\in\mathbb N$. By compactness of $[a,b]^k$ choose a subsequence $(u^{n_i})_i$ convergent to $u\in[a,b]^k$ and let $U:=\{u_1,\ldots, u_k\}$.
		
		Since $x$ was a density point, and by Lemma 1, we have
		\begin{equation*}
			\begin{split}
				2 & =\lim_{n\to\infty}\lambda\left(q^{-n}E\cap\left[xq^{-n}-1, xq^{-n}+1\right]\right)=\\&=\lim_{n\to\infty}\lambda\left(\left(q^{-n}V_n+E\right)\cap\left[xq^{-n}-1, xq^{-n}+1\right]\right)=\\&=
				\lim_{n\to\infty}\lambda((U_n+E)\cap[-1,1])=\\&=\lim_{n\to\infty}\lambda\left(\bigcup_{j=1}^k(u^n_j+E)\cap [-1,1]\right)=\\&= \lambda\left((U+E)\cap[-1,1]\right).
			\end{split}
		\end{equation*}
		
		Since $U+E$ is closed, we get $[-1,1]\subset U+E$. 
		Note that $E$, as a closed set, is either nowhere dense or has non-empty interior. Since a finite union of nowhere dense sets is nowhere dense, $E$ contains an interval.

	\item[\ref{con2}$\Rightarrow$\ref{con3}]  Fix $\varepsilon>0$ and $y=\sum_{i=1}^\infty\sigma_iq^i\in E$, where $(\sigma_i)\in\Sigma^{\mathbb N}$. Choose $n\in\mathbb N$ such that $q^n\diam E<\varepsilon$. Then $\sum_{i=1}^{n}\sigma_iq^i\in F_n$. Observe that $E=\bigcup_{v\in F_n}g_v[E]$ where $g_v(x)=v+q^nx$ are affine functions. Thus if $a$ is an interior point of $E$, so is $g_v(a)$. Therefore  
	$$\sum_{i=1}^{n}\sigma_iq^i+q^na\in \inte E.$$
	Moreover
	$$\left|\sum_{i=1}^{n}\sigma_iq^i+q^na-y\right|=q^n\left|a-\sum_{i=1}^\infty\sigma_{i+n}q^{i}\right|\leqslant q^n\diam E<\varepsilon.$$ Hence $E$ is the closure of its interior points, or regularly closed.
	\item[\ref{con3}$\Rightarrow$\ref{con4}] First we need to make some observations.
	
	\noindent\textbf{Claim 1.} \textit{Given $m\in\mathbb N$, $v\in\mathbb R$ and a bounded interval $I\subset \mathbb R$ there exists a finite $E$-tiling of $I$ such that its basis $B$ contains $v+q^{-m} F_m$.}
	
	\noindent \textit{Proof.} Fix $m\in\mathbb N$,  $v\in\mathbb R$ and a bounded interval $I\subset \mathbb R$. Without loss of generality assume that $v+q^{-m} F_m\subset I$. Define $$L:=\diam (q^{-m}  F_m ) +\max\{|e|\colon e\in E\}$$
	and $[x,y]:=[\inf I- L,\sup I +L]$.
	Take an interval $[a,b]\subset E$ and find $n\in\mathbb N$ such that $n> m$ and $$(b-a)>q^n(y-x).$$ 
    We have 
	$$
	q^{-n}[a,b]\subset q^{-n}E=q^{-n}( F_n+q^nE)=q^{-n} F_n+E.
	$$
Put $F:=x-aq^{-n}+q^{-n} F_{n-m}$. Then, since $q^{-n} F_n=q^{-n} F_{n-m}+q^{-m} F_m$,
\begin{multline*}[x,y]\subset x+[0,(b-a)q^{-n}]=\\=x-aq^{-n}+q^{-n}[a,b]\subset x-aq^{-n}+q^{-n} F_n+E=\\=F+q^{-m}F_m+E.\end{multline*}
     
	Since $$v+q^{-m} F_m\subset I\subset [x,y]\subset  F+q^{-m} F_m+E,$$ there exists $f\in F$ such that $$|v-f|\leqslant \diam (q^{-m}  F_m ) +\max\{|e|\colon e\in E\}=L. $$ Put 
	$$B:=F+q^{-m} F_m+v-f.$$ 
	Since $f\in F$, so $v+q^{-m} F_m \subset B$ and also $$I\subset[x,y]+v-f\subset F+q^{-m}F_m+E+v-f=B+E.$$ 
	\begin{flushright}
	    \qedsymbol
	\end{flushright}
    
	\noindent\textbf{Claim 2.} \textit{Let $\mathcal T_0$ be an  
		$E$-tiling (with the basis $B_0$) of the interval $[x,y]$, where $y-x \geqslant \diam E$. If $\mathcal T$ is an $E$-tiling (with the basis $B$) of $[u,w]\supset [x,y]$ extending $\mathcal T_0$, then  
		$$\{\beta\in B\colon (\beta+E)\cap (u,w)\neq \emptyset\}$$ is uniquely determined by $\mathcal T_{0}$. In particular, an $E$-tiling of $\mathbb R$ extending $\mathcal T_0$ is unique.}

	\noindent \textit{Proof.}
    First we show that if $\beta\in B\setminus B_0$, then either $\beta+E < (x,y)$ or $ (x,y)<\beta+ E$. 
	Suppose that this is not the case. Since $y-x\geq\diam E$, there is a point $z\in(x,y)\cap(\beta+E)$. As $E$ is regularly closed, $z$ is in the closure of $\beta+\inte E$, and therefore $(x,y)\cap(\beta+E)$ contains an open interval. But this contradicts the fact that $(x,y)\subset B_0+E$ and $\beta+E$ is measure disjoint with $B_0+E$.

	Define
	$$B_+=\{\beta\in B\setminus B_0\colon (\beta+E)\cap (u,w)\neq \emptyset\text{ and } \beta+ E>(x,y)\},$$
	$$B_-=\{\beta\in B\setminus B_0\colon (\beta+E)\cap (u,w)\neq \emptyset\text{ and }  \beta+ E<(x,y)\}.$$
	Consider the maximal interval $I_0=[p_0,q_0]$ of $B_0+E$ such that $[x,y]\subset I_0$. If $q_0\geqslant w$, then $B_+$ is empty (and hence uniquely determined). Assume that $q_0< w$. 
	Our aim is to show that $q_0=\min B_++\min E$. By maximality of $I_0$ there is a strictly decreasing sequence $(a_n)$ tending to $q_0$ such that $a_n\notin B_0+E$. Since $B_0+E$ is closed, there are pairwise disjoint open intervals $J_n$ centered in $a_n$ and disjoint with $B_0+E$. If $q_0<\min B_++\min E$, then $B+E$ could no cover any $J_n$ that is contained in $(q_0,\min B_++\min E)$. This contradicts the fact that $\mathcal{T}$ is a cover of $[u,w]$.  If $q_0>\min B_++\min E$, then $(\min B_++\inte E)\cap[p_0,q_0]$ would contain an interval, as $E$ is regularly closed. This again yields a contradiction, this time with the fact that elements of $\mathcal{T}$ are pairwise measure disjoint. Note that $\beta_0:=q_0-\min E$ depends only on $\mathcal{T}_0$. Then $\beta_0=\min B_+$, and therefore the smallest element of $B_+$ is uniquely determined by $\mathcal{T}_0$.

	In the next step consider the maximal interval $I_1=[p_1,q_1]$ of $B_0\cup\{\beta_0\}+E$ such that $[x,y]\subset I_1$. If $q_1\geqslant w$, then $B_+$ has exactly one element $\beta_0$. If $q_1<w$, then by the same reasoning as before we get $q_1=\min(B_+\setminus\{\beta_0\})+\min E=\beta_1+\min E$.
	
	We continue this procedure inductively. The induction stops at the point when $q_n\geq w$. It cannot go on infinitely because that would yield an increasing and bounded sequence $\{\beta_0,\beta_1,\ldots\}\subset B$ and infinitely many measure disjoint copies of $E$ would be contained in a bounded interval, which is impossible because of positive measure.
    
	Therefore $B_+=\{\beta_0,\beta_1,\ldots, \beta_k\}$ is a (finite) uniquely determined set.
	We proceed analogously with $B_-$. \qedsymbol
	
	\noindent\textbf{Claim 3.} \textit{Given $v\in\mathbb R$ and $n\in\mathbb N$, if
		$[x,y]\subset v+q^{-n} F_n +E$ for some interval $[x,y]$ of length $\geqslant \diam E$, then there exists a unique  countable $E$-tiling of $\mathbb R$ with the basis $B$ such that $v+q^{-n} F_n\subset B$.}
	
	\noindent \textit{Proof.} Fix $v\in\mathbb R$ and $n\in\mathbb N$ such that
	$[x,y]\subset v+q^{-n} F_n+E$ for some interval $[x,y]$ of length $\geqslant \diam E$.
	By Claim 1, for any $M>0$ there exists a finite $E$-tiling of $[-M,M]$ with the basis $B_M\supset v + q^{-n} F_n$. Put $B=\bigcup_{M>0}B_M$.
	It follows from Claim 2 that $\mathcal T= \{\beta+ E\colon\beta\in B\}$ is a desired unique $E$-tiling of $\mathbb R$. \qedsymbol
	
	Condition (iv) follows from the following stronger Claim 4, which we will need later on.
 
\noindent\textbf{Claim 4.} \textit{For  any sequence of digits $(\sigma_i)\in\Sigma^{\mathbb N}$, there exists a periodic $E$-tiling of $\mathbb R$ with the basis $B$ such that $$B=\bigcup_{n\in\mathbb N}\left(q^{-n}F_n-\sum_{i=1}^{n}\sigma_i q^{-i+1}\right)+t\mathbb Z,$$ where $t\neq 0$ is any period of $B$. Moreover, for sufficiently large $n\in\mathbb N$, the numbers $d q^{-n}$ are periods of $B$ for all $d \in D(\Sigma)$.}
		
		\noindent \textit{Proof.}  Fix a sequence of digits $(\sigma_i)\in\Sigma^{\mathbb N}$ and denote $$A_n:=q^{-n}F_n-\sum_{i=1}^{n}\sigma_i q^{-i+1}$$ for $n\in\mathbb N$. Take an interval $[a,b]\subset E$ and fix $n\in\mathbb N$ such that $b-a> q^{n} \diam E$. Since for any fixed $v\in\mathbb R$ we have $$v+q^{-n}[a,b]\subset v+ q^{-n}E= v+ q^{-n}F_n+E,$$ by Claim 3, {applied for $v=-\sum_{i=1}^{n}\sigma_iq^{-i+1}$,} there exists a unique $E$-tiling of $\mathbb R$ with the basis $B_n$ such that $A_n\subset B_n$. Since the sequence of sets $(A_n)$ is increasing with respect to $n$, it follows from uniqueness that $B_n=B$ does not depend on $n$.
		
		Fix a period $t\neq 0$.  
		Take an interval $[a,b]\subset E$ and $m\in\mathbb N$ such that $b-a> q^m t$. Then
		$$q^{-m}[a,b]\subset q^{-m}F_m+E$$
		and hence $A_m+E$ contains an interval $[x,y]$ of length $>t$. Fix $\beta\in B$.
		Then there exists $k\in\mathbb Z$ such that $$\emptyset\neq(\beta+kt+\inte E)\cap (x,y)\subset (\beta+kt+\inte E)\cap (A_m+\inte E).$$ 
		Since $\beta+kt\in B$, $A_m\subset B$ and distinct elements of the tiling are measure disjoint, it follows that $\beta+kt\in A_m$, that is, $$\beta\in A_m-kt\subset \bigcup_{m\in\mathbb N}A_m+ t\mathbb Z.$$ The opposite inclusion is obvious.
  
	For any $v\in \mathbb R$ we have 
	$$v+q^{-n}[a,b]\subset v+ q^{-n}E= v+ q^{-n}F_n+E,$$
	and so, by Claim 3 (recall that $q^n\diam E<b-a$), there exists a unique $E$-tiling of $\mathbb R$ with the basis $B_v\supset v+ q^{-n}F_n$.
	Note that for any $v$ and $v'$ we have $v'+q^{-n}F_n\subset B_v+(v'-v)$, so by uniqueness we get $B_v+(v'-v)=B_{v'}$. 
        Since $$q^{-n-1}F_{n+1}=q^{-n}\Sigma+q^{-n}F_n,$$
	 we get $B_v=B_{n+1}=B$ for any $v\in q^{-n}\Sigma-\sum_{i=1}^{n+1}\sigma_{i}q^{-i+1}$. In particular, for any $\sigma,\sigma'\in\Sigma$ we have $(\sigma-\sigma')q^{-n}+B=B$, that is, $(\sigma-\sigma')q^{-n}$ is a period of $\mathcal T$. \qedsymbol
	
	\item[\ref{con4}$\Rightarrow$\ref{con5}] Let $B\subset \mathbb R$ be the basis of the $E$-tiling $\mathcal T$. Since $B$ is countable, it follows from Baire category theorem that $E$ contains an interval, and hence (as noted at the beginning of the proof) $|F_n|=|\Sigma|^n$. The set $P$ of all periods of $B$ is closed under subtraction. Any nontrivial subgroup $G$ of $(\mathbb R,+)$ is either dense or has the form $G=\delta\mathbb Z$ for some $\delta>0$. If $P$ were dense, so would be $B$ and some bounded interval would contain infinitely many elements of $\mathcal T$, which contradicts the fact that they are measure disjoint. Hence $P=\delta\mathbb Z$ for some $\delta>0$ and, by assumption, $D(\Sigma)\subset q^nP=(\delta q^n) \mathbb Z$.

 \item[\ref{con5}$\Rightarrow$\ref{con10}] Let $\delta>0$ be a common divisor of $D(\Sigma)$. Since $|F_n|=|\Sigma|^n$, we have
 $$\mu_n(B)=q^n|B\cap F_n|$$ for each $n\in\mathbb N$ and Borel set $B\in\mathcal B(\mathbb R)$. Note that $F_n-F_n\subset (\delta q^{n})\mathbb Z$. It follows that for any interval $I\subset \mathbb R$ we have
\[\mu_n(I)=q^n|I\cap F_n|\leqslant q^nk < \frac{\lambda(I)}\delta+ q^n,\]
where $k\in\mathbb N$ is chosen in such a way that $(k-1)\delta q^n<{\lambda(I)}\leqslant k\delta q^n$. Therefore
\[
\mu_\infty(I)\leqslant\liminf_{n\to\infty}\mu_n(I)\leq\frac{\lambda(I)}{\delta}
\] for any open interval $I\subset \mathbb R$. Now for any Borel set $B\in\mathcal B(\mathbb R)$ we have
\begin{multline*}\lambda(B)=\inf\left\{\sum_{k\in\mathbb N}\lambda(I_k)\colon B\subset \bigcup_{k\in\mathbb N}I_k\right\}\geqslant \\ \geqslant \inf\left\{\sum_{k\in\mathbb N}\delta\mu_\infty(I_k)\colon B\subset \bigcup_{k\in\mathbb N}I_k\right\} \geqslant \delta\mu_{\infty}(B).\end{multline*}

\item[\ref{con1}$\Rightarrow$\ref{con6}] One-dimensional Hausdorff measure in $\mathbb R$ coincides with Lebesgue measure.

\item[\ref{con6}$\Rightarrow$\ref{con5}] Suppose $|F_n|=k<|\Sigma|^n$ for some $n\in\mathbb N$. Then for all $m\in\mathbb N$ we have $|F_{mn}|\leqslant |F_n|^m=k^m$ and for any $\varepsilon>0$ there exists $m_{\varepsilon}\in\mathbb N$ such that $$q^{m_{\varepsilon}n}\diam E <\varepsilon.$$
Since $E=F_{m_{\varepsilon}n}+q^{m_{\varepsilon}n}E$, for any $s>0$ we have
$$ 
H^s(E)\leqslant\lim_{\varepsilon\to 0}|F_{m_\varepsilon n}|\left(q^{m_{\varepsilon} n}\diam E\right)^s\leqslant\lim_{m\to\infty}(kq^{ns})^m(\diam E)^s,$$
where $H^s$ denotes $s$-dimensional Hausdorff measure. Since $$kq^{ns}<1 \iff s>\frac1n\log_{|\Sigma|}(k),$$ it follows that $$\dim_H(E)\leqslant \frac1n\log_{|\Sigma|}(k)<1.$$

\item[\ref{con10}$\Rightarrow$\ref{con1}] $E$ is the support of measure $\mu_\infty$.

\item[\ref{con10}$\Rightarrow$\ref{con7}]
Consider the space $X=[0,1)$ with the circle topology and
define $T\colon [0,1)\rightarrow [0,1)$ by the formula
$$T(x)=|\Sigma|x\, \mathrm{mod}\, 1.$$
Then $T$ is a continuous map that is measure preserving and ergodic with respect to Lebesgue measure. Let $\delta$ be the common divisor of $D(\Sigma)$. We can assume without loss of generality that $0\in\Sigma$. Indeed, if $\mu_\infty^x$ is the measure asssociated with $\Sigma+x$ and $q$, then \begin{multline*}
    f_\delta\#\mu_\infty^x(B)=\mu_\infty\left(f_\delta^{-1}[B]-\frac{x}{1-q}\right)=\\=\mu_\infty\left(f_\delta^{-1}\left[\left(B-\frac x{\delta(1-q)}\right)\operatorname{mod}1\right]\right).
\end{multline*}
We have
\begin{multline*}
    T\#(f_\delta\#\mu_{n+1})(B)=q\sum_{\sigma\in\Sigma}\mu_n\left(\frac1q (T\circ f_\delta)^{-1}[B]-\sigma\right)=\\=q\sum_{\sigma\in\Sigma}\mu_n\left( (T\circ f_\delta\circ h_\sigma)^{-1}[B]\right)=q\sum_{\sigma\in \Sigma}\mu_n\left(f_\delta^{-1}[B]\right)=f_\delta\# \mu_n(B),
\end{multline*}
where $h_\sigma(x)=q(x+\sigma)$ for $x\in \mathbb R$ and $\sigma\in \Sigma$. Since $T$ and $f_\delta$ are continuous (with respect to circle topology), we get $T\# (f_\delta \# \mu_\infty)=f_\delta\# \mu_\infty$, that is, $T$ is measure preserving with respect to $f_\delta\# \mu_{\infty}$.

Since $f_\delta^{-1}[A]=\bigcup_{k\in\mathbb Z}(A+k)\delta$, it follows from the assumption that $f_\delta\#\mu_\infty$ is absolutely continuous with respect to Lebesgue measure. Hence $T$ is ergodic with respect to $f_\delta\#\mu_\infty$. By Lemma \ref{lem3} the measure $f_\delta\#\mu_\infty$ is Lebesgue measure.

\item[\ref{con7}$\Rightarrow$\ref{con10}] Clearly,
 $$\lambda(B)\geqslant \delta\lambda (f_\delta[B])=\delta\mu_\infty(f_\delta^{-1}[f_\delta[B]])\geq \delta\mu_\infty(B)$$
for all sets $B\in\mathcal B(\mathbb R)$.
\item[\ref{con7}$\Rightarrow$\ref{con8}] 
For any $n\in\mathbb N$ we have
\begin{multline*}
    0=\int_0^1\exp\left({2\pi i n s}\right)\mathrm{d}\lambda(s)=\int_0^1\exp\left({2\pi i n s}\right)\mathrm{d}(f_\delta\# \mu_\infty)(s)=\\=\int_{\mathbb R}\exp\left(2\pi i n f_\delta(s)\right)\mathrm{d}\mu_{\infty}(s)=\int_{\mathbb R}\exp\left(\frac{2\pi i n  s}{\delta}\right)\mathrm{d}\mu_\infty(s).
\end{multline*}

\item[\ref{con8}$\Rightarrow$\ref{con9}] For any $n,k\in\mathbb N$ we have
			\begin{multline*}
				\int_{\mathbb R}\exp\left(\frac{2\pi i n s}\delta\right)\mathrm{d}\, \mu_{k+1}(s)=q^{k+1}\sum_{(\sigma_i)\in \Sigma^{k+1}}\exp\left(\frac{2\pi i n \sum_{i=1}^{k+1}(\sigma_iq^i)}\delta \right)=\\=q^{k+1}\sum_{(\sigma_i)\in \Sigma^{k}}\sum_{\sigma\in\Sigma}\exp\left(\frac{2\pi i n \sum_{i=1}^{k}(\sigma_iq^i)}\delta \right)\exp\left(\frac{2\pi i n \sigma q^k}{\delta}\right)=\\=q\sum_{\sigma\in\Sigma}\exp\left(\frac{2\pi i n \sigma q^k}{\delta}\right)\int_{\mathbb R}\exp\left(\frac{2\pi i n s}\delta\right)\mathrm{d}\, \mu_{k}(s).
			\end{multline*}
			It follows by simple induction that
			
			\begin{equation}\label{eq1}\int_{\mathbb R}\exp\left(\frac{2\pi i n s}\delta\right)\mathrm{d}\, \mu_{\infty}(s)=\prod_{k=1}^\infty q\sum_{\sigma\in\Sigma}\exp\left(\frac{2\pi i n \sigma q^k}{\delta}\right).\end{equation}
			
			Fix $n\in\mathbb N$ and suppose  $$a_k:=q\sum_{\sigma\in \Sigma}\exp\left(\frac{2\pi i n \sigma q^k}{\delta}\right)\neq 0$$ for all $k\in\mathbb N$. For a finite set $S\subset \mathbb R$ and $\alpha\in (0,\pi)$ such that $\max\{|2\pi s|\colon s\in S\}< \alpha$  we have $$\left|\sum_{s\in S}\exp\left(2\pi i s\right)\right|\geqslant |S|\cos\left(\alpha\right).$$
			Hence for sufficiently large $k\in \mathbb N$ we have
			$$\log\left(q\left|\sum_{\sigma\in\Sigma}\exp\left(\frac{2\pi i n \sigma q^k}{\delta}\right)\right|\right)\geqslant \log\left(q|\Sigma|\cos\left(\frac1 k\right)\right)=\log(\cos\left(\frac1 k\right)).$$
			Since the series $$\sum_{k=1}^{\infty}\log(\cos\left(\frac1k\right))$$ is convergent, it follows that
			$$\prod_{k=1}^\infty |a_k| > 0,$$
			which contradicts our assumption.
			
			\noindent\textbf{Remark.} In the derivation of the equation \eqref{eq1} we did not use the assumption (ix).
			
	\item[\ref{con9}$\Rightarrow$\ref{con8}] It follows immediately from the equation \eqref{eq1}. 

        \item[\ref{con8}$\Rightarrow$\ref{con7}] Define $f\colon \mathbb R\rightarrow [0,1)$ as
        $f(x)=\frac x\delta -\lfloor \frac x \delta \rfloor$ and let $\mu_\infty'\colon \mathcal B[0,1)\rightarrow [0,1]$ be given by the formula $\mu_\infty'(B)=\mu_\infty(f^{-1}[B])$, that is, $\mu_\infty'=f\# \mu_\infty$. Then for any $n\in\mathbb N$ we have
        \begin{multline*}\int_0^1\exp(2\pi i n s)\mathrm{d}\, \mu_\infty'(s)=\int_{\mathbb {R}}\exp(2\pi i n \left(\frac x\delta -\left\lfloor \frac x \delta \right\rfloor\right))\mathrm{d}\, \mu_\infty(s)=\\=\int_{\mathbb R}\exp\left(\frac{2\pi i n s}\delta\right)\mathrm{d}\, \mu_{\infty}(s)=0,\end{multline*} that is, all Fourier coefficients $\hat{\mu}_\infty'(n)$ (except for $\hat{\mu}_\infty'(0)$) are zeros. It follows from Lemma \ref{lem4} that $\mu_\infty'$ is Lebesgue measure $\lambda|_{\mathcal B[0,1)}$. 
 \end{enumerate}

Now we prove \eqref{eq2}. Since a translation of $\Sigma$ by $x$ results in translating $E$ and a shift in $\mu_\infty$ by the same number $\frac{x}{1-q}$, we can assume without loss of generality that $0 \in \Sigma$.
    By Claim 4 applied for the sequence $(\sigma_i)\equiv 0$, there exists some $E$-tiling of $\mathbb R$ with the basis $B$ such that $$\bigcup_{n\in\mathbb N}q^{-n}F_n\subset B.$$ Define
$$s(M)=\sup\left\{\frac{|B\cap I|}{\lambda(I)}\colon I \text{ is an open interval of length } \geqslant M\right\}$$ and
$$u:=\lim_{M\to\infty}s(M).$$
Then for any open interval $I\subseteq \mathbb R$ we have
$$|B\cap I|\cdot\lambda(E)=\sum_{x\in B\cap I}\lambda(x+E)=\lambda\left(\bigcup_{x\in B\cap I}(x+E)\right)\leqslant \lambda(I)+2\diam E.$$
Hence $$s(M)\leqslant \frac1{\lambda(E)} + \frac{2\diam E}{\lambda(E)M}$$ and consequently $u\leqslant \frac1{\lambda(E)}$. For any open interval $I$ and $n\in\mathbb N$ we have
\begin{multline*}
    \mu_n(I)=q^n|I\cap F_n|=q^n|(q^{-n}I)\cap (q^{-n}F_n)|\leqslant \\ \leqslant q^n |(q^{-n}I)\cap B|\leqslant q^ns(\lambda_n)\lambda(q^{-n}I)=s(\lambda_n)\lambda(I),
\end{multline*}
where $\lambda_n=\lambda(q^{-n}I)$. Therefore
$$\mu_\infty(I)\leqslant \liminf_{n\to\infty} \mu_n(I) \leqslant u\lambda(I)\leqslant \frac{\lambda(I)}{\lambda(E)}.$$ Now for any Borel set $B\in\mathcal B(\mathbb R)$ we have
\begin{multline*}\lambda(B)=\inf\left\{\sum_{k\in\mathbb N}\lambda(I_k)\colon B\subset \bigcup_{k\in\mathbb N}I_k\right\}\geqslant \\ \geqslant \inf\left\{\sum_{k\in\mathbb N}\lambda(E)\mu_\infty(I_k)\colon B\subset \bigcup_{k\in\mathbb N}I_k\right\} \geqslant \lambda(E)\mu_{\infty}(B)\end{multline*}
and consequently
    $$ 1=\mu_\infty(E)=\mu_\infty( B)+\mu_\infty(B^c)\leqslant\frac{\lambda(E\cap B)+\lambda(E\setminus B)}{\lambda(E)}=1.$$
	\end{proof}

The implication (i)$\Rightarrow$(ii) was actually already proven by a different method in \cite[2.3 Corollary]{Schief}.

\begin{cor}
    Assume that the set $(\Sigma - x)/\delta$ has all remainders modulo $|\Sigma|$ for some $x\in\mathbb R$ and $\delta>0$. Then $B=\delta\mathbb Z$ is the basis for the $E$-tiling of $\mathbb R$, $\delta$ is  the greatest common divisor of $D(\Sigma)$ and $\lambda(E)=\delta$, where $E=E(\Sigma,q)$ and $|\Sigma|\cdot q=1$.
\end{cor}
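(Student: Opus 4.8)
The plan is to reduce to a normalised case, invoke Theorem \ref{thm7}, identify the tiling basis via Claim 4 of its proof, and finish using the equidistribution condition \ref{con7}. First I normalise. Translating $\Sigma$ by a constant shifts $E$ but changes neither $\lambda(E)$ nor $D(\Sigma)$ nor the period lattice of an $E$-tiling, while scaling $\Sigma$ by $c>0$ scales $E$, $\lambda(E)$, $D(\Sigma)$ and any tiling basis by $c$; so, after replacing $\delta$ by $\gcd D(\Sigma)$ (the hypothesis survives a suitable further shift of $x$), rescaling by $\delta^{-1}$, and translating once more, we may assume $\Sigma\subset\mathbb Z$, $0\in\Sigma$, $q=1/N$ with $N=|\Sigma|$, the residues modulo $N$ of the elements of $\Sigma$ pairwise distinct (hence exhausting $\mathbb Z/N$), and $\gcd D(\Sigma)=1$. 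It then suffices to show that $\mathbb Z$ is a basis for an $E$-tiling of $\mathbb R$ and that $\lambda(E)=1$.

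Next I apply Theorem \ref{thm7}. By Lemma \ref{lem2} the map $(\sigma_i)_{i=1}^{n}\mapsto\sum_{i=1}^{n}\sigma_iq^i$ is injective modulo $\mathbb Z$ for each $n$; multiplying by $q^{-n}=N^{n}$ this says precisely that $q^{-n}F_n=\{\sum_{i=1}^{n}\sigma_iN^{\,n-i}:\sigma_i\in\Sigma\}$ is a set of $N^{n}$ integers pairwise distinct modulo $N^{n}$, i.e.\ a complete residue system modulo $N^{n}$; in particular $|F_n|=|\Sigma|^{n}$. Since $D(\Sigma)\subset\mathbb Z$ has a common divisor, condition \ref{con5} holds, so all conditions of Theorem \ref{thm7} hold: $\lambda(E)>0$, formula \eqref{eq2} gives $\mu_\infty=\lambda(\,\cdot\cap E)/\lambda(E)$, and condition \ref{con9} holds with $\delta=1$ — for $n=N^{j}m$ with $N\nmid m$ take $k=j+1$, so that $\sum_{\sigma\in\Sigma}\exp(2\pi i n\sigma q^{k})=\sum_{\sigma\in\Sigma}\zeta^{m\sigma}=\sum_{r=0}^{N-1}\zeta^{mr}=0$, where $\zeta=e^{2\pi i/N}$ (the middle equality uses that $\Sigma$ is a complete residue system modulo $N$, the last that $\zeta^{m}\ne1$). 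Hence, by the implications \ref{con9}$\Rightarrow$\ref{con8}$\Rightarrow$\ref{con7} which keep $\delta$ fixed, condition \ref{con7} holds with $\delta=1$, that is, $f_1\#\mu_\infty=\lambda|_{\mathcal B[0,1)}$.

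Now I identify the basis. Apply Claim 4 in the proof of \ref{con3}$\Rightarrow$\ref{con4} to the constant sequence $(\sigma_i)\equiv0$ (legitimate since $0\in\Sigma$): there is a periodic $E$-tiling of $\mathbb R$ with basis $B\supset\bigcup_{n}q^{-n}F_n$ such that $d\,q^{-n}=d\,N^{n}$ is a period for every $d\in D(\Sigma)$ once $n$ is large. As in the proof of \ref{con4}$\Rightarrow$\ref{con5}, the period group of $B$ is $\delta'\mathbb Z$ for some $\delta'>0$; since $D(\Sigma)$ generates $\mathbb Z$ (as $\gcd D(\Sigma)=1$), the periods $d\,N^{n}$ generate $N^{n}\mathbb Z$, so $N^{n}\mathbb Z\subset\delta'\mathbb Z$ for all large $n$. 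For such $n$ we get
\[
B\ \supset\ q^{-n}F_n+\delta'\mathbb Z\ \supset\ q^{-n}F_n+N^{n}\mathbb Z\ =\ \mathbb Z,
\]
the last equality because $q^{-n}F_n$ is a complete residue system modulo $N^{n}$. Thus $B\supset\mathbb Z$; as the tiles $\beta+E$, $\beta\in B$, are pairwise measure disjoint, so are the tiles $k+E$, $k\in\mathbb Z$, i.e.\ $\rho(a):=\#\{k\in\mathbb Z:a+k\in E\}=\sum_{k\in\mathbb Z}\mathbf 1_{k+E}(a)\le1$ for almost every $a$.

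Finally I conclude. Combining $f_1\#\mu_\infty=\lambda|_{\mathcal B[0,1)}$ with $\mu_\infty=\lambda(\,\cdot\cap E)/\lambda(E)$ yields, for every Borel $A\subset[0,1)$,
\[
\lambda(A)=\mu_\infty\bigl(f_1^{-1}[A]\bigr)=\frac1{\lambda(E)}\sum_{k\in\mathbb Z}\lambda\bigl(E\cap(A+k)\bigr)=\frac1{\lambda(E)}\int_A\rho\,\mathrm d\lambda,
\]
so $\rho=\lambda(E)$ almost everywhere. Since $E$ is bounded, $\rho$ is $\{0,1,2,\dots\}$-valued, so $\lambda(E)$ is a nonnegative integer; as $\lambda(E)>0$ and $\rho\le1$ a.e., we obtain $\lambda(E)=1$ and $\rho=1$ a.e. Hence $\bigcup_{k\in\mathbb Z}(k+E)=\mathbb R$ modulo a null set and the sets $k+E$ overlap pairwise only in null sets; so if there were $\beta\in B\setminus\mathbb Z$, the set $\beta+E$ — contained modulo null sets in $\bigcup_{k}(k+E)$ yet meeting each $k+E$ only in a null set — would be null, contradicting $\lambda(E)=1$. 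Therefore $B=\mathbb Z$, and undoing the normalisations gives $B=\delta\mathbb Z$ with $\delta=\gcd D(\Sigma)$ and $\lambda(E)=\delta$. The step I expect to be the main obstacle is the inclusion $N^{n}\mathbb Z\subset\delta'\mathbb Z$, which couples the arithmetic lattice coming from Lemma \ref{lem2} with the a priori unknown period group of the tiling, together with the final use of condition \ref{con7} to force the basis to be exactly $\mathbb Z$ and $\lambda(E)$ to be the integer $1$.
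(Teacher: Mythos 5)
Your proof is correct, and while its first half runs parallel to the paper's, the second half takes a genuinely different route. Both arguments start the same way: normalise, check the hypotheses of Theorem \ref{thm7} (you via \ref{con5} and \ref{con9}; the paper via \ref{con9} alone), apply Claim~4 to the zero sequence, and combine Lemma \ref{lem2} with the periods $dq^{-n}$, $d\in D(\Sigma)$, to get $\delta\mathbb Z\subset B$ --- your ``complete residue system modulo $N^{n}$'' formulation is the same computation as the paper's orbit argument for the maps $f_\gamma$ on $C/\delta$. The divergence is in how the proof is finished. The paper stays combinatorial: $B\subset\delta\mathbb Z$ is read off from the explicit form of the basis in Claim~4 (taking $t=\delta q^{-n}$), whence $B=\delta\mathbb Z$ by counting inside one period window, and $\lambda(E)=\delta$ follows by cutting $E$ into the pieces $E\cap\delta[k,k+1)$ and translating them into $[0,\delta)$. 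You instead play condition \ref{con7} off against formula \eqref{eq2} to show that the covering multiplicity of the family $\{\delta k+E\}_{k\in\mathbb Z}$ is a.e.\ equal to the constant $\lambda(E)/\delta$; integer-valuedness together with the bound $\rho\leq 1$ (which only needs $\delta\mathbb Z\subset B$) then forces $\lambda(E)=\delta$ and $\rho=1$ a.e., and the latter excludes any $\beta\in B\setminus\delta\mathbb Z$. Your route derives both remaining conclusions from a single measure-theoretic identity and never needs the inclusion $B\subset\delta\mathbb Z$ from Claim~4, at the price of invoking condition \ref{con7}, which the paper's proof of the corollary does not use. One further remark in your favour: your opening move of replacing the given $\delta$ by $\gcd D(\Sigma)$ (after a further shift of $x$) quietly repairs a small imprecision in the statement --- the hypothesis as written does not force $\delta=\gcd D(\Sigma)$ (take $\Sigma=\{0,2,4\}$, $x=0$, $\delta=1$, $q=\tfrac13$, where $\gcd D(\Sigma)=2$ and $\lambda(E)=2$), whereas the paper simply asserts this equality as a ``note''; your parenthetical claim that the hypothesis survives the replacement is true but deserves the one-line check that $\gcd(D(\Sigma)/\delta)$ is coprime to $|\Sigma|$, so that dividing by it preserves the complete residue system.
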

\begin{proof}
    Clearly, condition (x) of Theorem \ref{thm7} is satisfied with $\delta$.
     Since a translation of $\Sigma$ by $x$ results in translating $E$ and a shift in $\mu_\infty$ by the same number $\frac{x}{1-q}$, we can assume without loss of generality that $x= 0 \in \Sigma$.
    Also note that $\delta$ is the greatest common divisor of $D(\Sigma)$.
    
    By Claim 4 applied for the sequence $(\sigma_i)\equiv 0$, there exists a periodic $E$-tiling of $\mathbb R$ with the basis $B$ such that $$B=\bigcup_{k\in\mathbb N}q^{-k}F_k+t\mathbb Z,$$ where $t\neq 0$ is any period of $B$, and for some $n\in\mathbb N$ the numbers $d q^{-n}$ are periods for all $d \in D(\Sigma)$. Since there exist  $k_1,\ldots, k_m\in\mathbb Z$ and $d_1,\ldots, d_m\in D(\Sigma)$ such that $k_1d_1+\dots+k_md_m=\delta$, the number $\delta q^{-n}$ is also a period of $B$.
    
    Define $C:=B\cap [0,\delta q^{-n})\subset \delta \mathbb Z$. Then, since $q^{-k-1}F_k+\Sigma=q^{-k-1}F_{k+1}$, for any $\gamma \in \Sigma/\delta$ the set
    $C/\delta$ is closed under the map $$f_\gamma(x)=(x/q+\gamma) \,\mathrm{mod}\, q^{-n}.$$ 
    
    Note that, by Lemma \ref{lem2} with $S=\Sigma/\delta$, if $(\gamma_1,\ldots, \gamma_{n})\neq (\gamma_1',\ldots, \gamma_{n}')$, then $$(f_{\gamma_1}\circ\dots\circ f_{\gamma_{n}})(x)\neq (f_{\gamma_1'}\circ\dots\circ f_{\gamma_{n}'})(x).$$ It follows that
    $$\{0,1,\ldots, q^{-n}-1\} \subset C/\delta,$$
    and in fact $B=\delta\mathbb Z$. 
    
    Let $E_k=E\cap\delta[k,k+1)$ for $k\in\mathbb Z$. Then $$\lambda\left((E_k-k\delta)\cap(E_l-l\delta)\right)\leqslant \lambda ((E-k\delta)\cap (E-l\delta))=\lambda(E\cap (E+\delta(k-l)))=0,$$ provided $k\neq l$. Hence \begin{multline*}
        \lambda(E)=\lambda\left(\bigcup_{k\in\mathbb Z}E_k\right)=\sum_{k\in\mathbb Z}\lambda(E_k)=\\=\sum_{k\in\mathbb Z}\lambda(E_k-k\delta)=\lambda\left(\bigcup_{k\in\mathbb Z}(E_k- k\delta)\right)=\lambda([0,\delta))=\delta.
    \end{multline*}
\end{proof}

\section{Prime case}
\label{sec4}

\begin{lem}
    \label{lem5}Let $p$ be an odd prime number and $k\in\mathbb N$. If $\Sigma\subset \mathbb Z$ is a finite set of cardinality $p$ and $$\sum_{s\in \Sigma}\exp\left(\frac{2\pi i s }{p^k}\right)=0,$$
    then $\left\{\exp\left(\frac{2\pi i s }{p^k}\right)\colon s\in \Sigma\right\}$ forms the set of vertices of a regular polygon.
\end{lem}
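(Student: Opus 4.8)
The plan is to work inside the cyclotomic field $\mathbb{Q}(\zeta)$ where $\zeta = \exp(2\pi i / p^k)$, and to use the well-known description of vanishing sums of $p^k$-th roots of unity. Write $\Sigma = \{s_1, \dots, s_p\} \subset \mathbb{Z}$ and let $\omega_j = \zeta^{s_j}$, so that $\omega_1 + \dots + \omega_p = 0$ is a vanishing sum of $p^k$-th roots of unity with exactly $p$ summands (counted with multiplicity — though a priori there could be repetitions among the $\omega_j$; I will address this below). The minimal nonzero vanishing sum of $N$-th roots of unity, when $N = p^k$ is a prime power, has exactly $p$ terms, namely a coset of the group $\mu_p$ of $p$-th roots of unity (this is classical; see e.g. Lam--Leung, "On vanishing sums of roots of unity", or it follows directly from the fact that $1 + \zeta^{p^{k-1}} + \zeta^{2p^{k-1}} + \dots + \zeta^{(p-1)p^{k-1}} = 0$ is, up to rotation, the only relation, because the $p^k$-th cyclotomic polynomial is $\Phi_{p^k}(x) = 1 + x^{p^{k-1}} + \dots + x^{(p-1)p^{k-1}}$). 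Since our sum has exactly $p = |\Sigma|$ nonzero terms and $p$ is the minimal length of a nontrivial relation, the multiset $\{\omega_1, \dots, \omega_p\}$ must be precisely such a minimal relation, hence equals $\zeta^{a}\mu_p$ for some $a$; these are the vertices of a regular $p$-gon inscribed in the unit circle.

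The key steps, in order, are: (1) reduce to the statement that any vanishing sum of $N$-th roots of unity with $N$ a prime power $p^k$ decomposes as a nonnegative-integer combination of "rotated" copies of the basic relation $\sum_{j=0}^{p-1} \zeta^{j p^{k-1}}$ — equivalently, that $\Phi_{p^k}$ generates the relation ideal; (2) observe that each such basic rotated relation contributes exactly $p$ roots of unity, so a vanishing sum of $p$ roots can involve only one such relation and with coefficient $1$; (3) conclude that $\{\omega_j\}$ is a single coset $\zeta^a \mu_p$, which are the vertices of a regular $p$-gon. For step (1) the cleanest route is the ring-theoretic one: the $\mathbb{Z}$-algebra $\mathbb{Z}[x]/(x^{p^k}-1)$ maps onto $\mathbb{Z}[\zeta]$ with kernel generated by $\Phi_{p^k}(x)$ together with the lower relations $x^{p^{k-1}} - 1$ shifted appropriately; since $x^{p^k} - 1 = (x^{p^{k-1}}-1)\Phi_{p^k}(x)$... wait, more carefully: $x^{p^k}-1 = \prod_{d \mid p^k}\Phi_d(x)$, so I will instead use the standard fact (Lam--Leung Theorem 2.2, or de Bruijn) that for $N = p^k$ every vanishing sum of $N$-th roots of unity lies in the $\mathbb{Z}_{\geq 0}$-span of the rotations $\{\eta \cdot (1 + \zeta^{p^{k-1}} + \dots + \zeta^{(p-1)p^{k-1}}) : \eta \in \mu_N\}$, when all coefficients are nonnegative integers — which is exactly our situation since $\Sigma$ is a set (indeed, all coefficients equal $1$).

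The main obstacle I anticipate is the possibility of \emph{repeated} values among the $\omega_j = \zeta^{s_j}$: even though $\Sigma \subset \mathbb{Z}$ is a genuine set, distinct integers $s_j$ can give equal $\zeta^{s_j}$ when they are congruent modulo $p^k$, so the multiset of roots of unity could, in principle, have fewer than $p$ distinct elements with multiplicities, and a short vanishing sum with multiplicities need \emph{not} be a single coset. I would handle this by noting that if two of the $\omega_j$ coincide, then after cancellation we would obtain a \emph{shorter} nontrivial vanishing sum of $p^k$-th roots of unity — but the minimal length of such a sum is $p$ (the smallest prime factor of $p^k$), so a nonzero vanishing sum of fewer than $p$ roots is impossible; hence the only way to have repetitions is for \emph{all} terms to cancel in pairs, impossible for $p$ odd (odd number of terms). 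Therefore all $\omega_j$ are distinct, the sum genuinely has $p$ distinct terms, and step (2) applies verbatim. The hypothesis that $p$ is odd is used exactly here (for $p = 2$ one could have $\Sigma$ with $\omega_1 = -\omega_2$, not a "polygon" in the intended sense), which matches the statement.
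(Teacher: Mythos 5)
Your main line of argument is correct, and it takes a genuinely different route from the paper. You reduce the lemma to the de Bruijn/Lam--Leung structure theorem for prime powers: every vanishing sum of $p^k$-th roots of unity with nonnegative integer coefficients is a nonnegative integer combination of rotates of $1+\zeta^{p^{k-1}}+\dots+\zeta^{(p-1)p^{k-1}}$, where $\zeta=\exp(2\pi i/p^k)$. Since each rotate carries total coefficient mass $p$ and your sum has mass exactly $p=|\Sigma|$, exactly one rotate occurs, with coefficient $1$; so the multiset $\{\zeta^{s}\colon s\in\Sigma\}$ is a single coset of the group of $p$-th roots of unity, i.e.\ a regular $p$-gon. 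The paper instead proves the special case it needs by hand: it reduces the exponents modulo $p^k$, uses a pigeonhole bound on the largest cyclic gap to translate $\Sigma$ so that all residues land in $[0,(p-1)p^{k-1}]$, and then notes that the resulting polynomial $\sum_{s}X^{s}$ vanishes at $\zeta$, is therefore divisible by the minimal polynomial $\Phi_{p^k}=1+X^{p^{k-1}}+\dots+X^{(p-1)p^{k-1}}$, and has the same degree, hence equals $\Phi_{p^k}$ (compare values at $X=1$). Your version outsources the work to a classical citable theorem and is conceptually cleaner; the paper's is self-contained and elementary. Either way the heart of the matter is the same divisibility by $\Phi_{p^k}$.

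One side remark in your proposal is wrong, though harmlessly so. To rule out repetitions among the $\omega_j=\zeta^{s_j}$ you argue that two coinciding terms would ``cancel'' and leave a shorter nontrivial vanishing sum. Coinciding roots of unity do not cancel --- they add, producing a coefficient $2$ --- so this argument does not work, and your attribution of the hypothesis that $p$ is odd to this step is likewise off (the structure-theorem argument is indifferent to the parity of $p$; oddness only matters for whether a coset of two antipodal points deserves to be called a polygon). Fortunately the repetition issue is already settled by your step (2): the structure theorem applies to sums with multiplicities, and the conclusion that the group-ring element equals a single rotate of $1+\zeta^{p^{k-1}}+\dots+\zeta^{(p-1)p^{k-1}}$ forces all $p$ terms to be distinct. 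You should simply delete the cancellation paragraph.
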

\begin{proof}
    Let $\Sigma \operatorname{mod}p^k=\{s_0,\ldots,s_{p-1}\}$, where $s_0\leq\ldots\leq s_{p-1}$. We choose $j$ such that $$s_{j+1}-s_j=\max\{s_{i+1}-s_{i}\colon i=0,\ldots, p-1\}\geq p^{k-1},$$ where $s_p:=s_0$. 
    We  assume that $\max(\Sigma \operatorname{mod}p^k)=p^k-p^{k-1}$, because we  can translate $\Sigma$ (by an integer) in such a way that $s_j$ is carried to $p^k-p^{k-1}$.

    Let $\varepsilon=\exp\left(\frac{2\pi i}{p^k}\right)$. By assumption $$0=\varepsilon^{s_0}+\dots+\varepsilon^{s_{p-1}},$$ that is, $\varepsilon$ is the root of a polynomial$$X^{s_0}+\dots+X^{(p-1)p^{k-1}}.$$ On the other hand, the minimal polynomial of $\varepsilon$ over $\mathbb Q$ is $$1+X^{p^{k-1}}+X^{2p^{k-1}}+\dots+X^{(p-1)p^{k-1}},$$
    cf. \cite[p. 280]{Lang}.
\end{proof}
\begin{cor}
    Let $|\Sigma|\cdot q=1$, where $|\Sigma|=p$ is a prime number. Then $E(\Sigma,q)$ contains an interval if and only if $(\Sigma-x)/\delta$ has all remainders modulo $p$ for some $x\in\bbR$ and $\delta>0$. 
\end{cor}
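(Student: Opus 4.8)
The statement is an equivalence, and I would prove its two directions separately: the ``if'' direction needs no primality (it is essentially the Corollary preceding Lemma~\ref{lem5}), whereas primality enters the converse only through Lemma~\ref{lem5}. For the ``if'' direction I would argue as follows. Assuming $(\Sigma-x)/\delta$ has all remainders modulo $p$, translate $\Sigma$ by $-x$ (which translates $E$ by a fixed amount) to reduce to $x=0$, so that $\Sigma/\delta\subset\mathbb Z$ meets every residue class mod $p$. Given $n\in\mathbb N$, write $n=p^am$ with $p\nmid m$ and take $k=a+1$; then $n\sigma q^k/\delta=m(\sigma/\delta)/p$, and as $\sigma$ ranges over $\Sigma$ the integers $m(\sigma/\delta)$ range over a complete residue system mod $p$, so $\sum_{\sigma\in\Sigma}\exp(2\pi i n\sigma q^k/\delta)=\sum_{j=0}^{p-1}\exp(2\pi i j/p)=0$. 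Hence condition (x) of Theorem~\ref{thm7} holds and $E(\Sigma,q)$ contains an interval.

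For the converse I would start by normalising. Suppose $E=E(\Sigma,q)$ contains an interval. The case $p=2$ is immediate, since any two-element set has $(\Sigma-\min\Sigma)/\diam\Sigma=\{0,1\}$, which meets both residue classes mod $2$; so I assume $p$ odd. By condition (v) of Theorem~\ref{thm7}, $D(\Sigma)$ has a common divisor; setting $\delta_0=\gcd D(\Sigma)$, fixing $x_0\in\Sigma$, and replacing $\Sigma$ by $(\Sigma-x_0)/\delta_0$ only rescales and translates $E$ and does not affect the conclusion, so I may assume $\Sigma\subset\mathbb Z$, $0\in\Sigma$ and $\gcd D(\Sigma)=1$. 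Theorem~\ref{thm7} then gives condition (x); and — this is the point to check carefully — reading the chain (vii)~$\Rightarrow$~(viii)~$\Rightarrow$~(ix)~$\Rightarrow$~(x) in its proof, the constant $\delta$ there may be taken to be $\gcd D(\Sigma)$, hence $\delta=1$ after the normalisation. Taking $n=1$ in condition (x) produces some $k\in\mathbb N$ with $\sum_{\sigma\in\Sigma}\exp(2\pi i\sigma/p^k)=0$.

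Finally I would invoke Lemma~\ref{lem5}: the $p$ numbers $\exp(2\pi i\sigma/p^k)$, $\sigma\in\Sigma$, form the vertices of a regular polygon inscribed in the unit circle, and since they are $p^k$-th roots of unity this polygon is a rotation of the set of $p$-th roots of unity by some $\exp(2\pi i a/p^k)$, $a\in\mathbb Z$; equivalently $\Sigma\bmod p^k=\{a+jp^{k-1}:0\le j<p\}$. If $k\ge 2$ then every element of $\Sigma$ is $\equiv a\pmod p$, so $D(\Sigma)\subset p\mathbb Z$, contradicting $\gcd D(\Sigma)=1$; hence $k=1$ and $\Sigma\bmod p=\{a,a+1,\dots,a+p-1\}=\{0,1,\dots,p-1\}$. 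Undoing the normalisation, $(\Sigma_{\mathrm{orig}}-x_0)/\delta_0$ has all remainders modulo $p$, which is what is claimed. The main obstacle is exactly the one flagged above: extracting from the proof of Theorem~\ref{thm7} that the $\delta$ in condition (x) may be chosen to be $\gcd D(\Sigma)$ (so that the vanishing exponential sum has the form $\sum\exp(2\pi i\sigma/p^k)$ needed for Lemma~\ref{lem5}). Primality is essential here — the equivalence fails for $n=4$ — and is used solely in Lemma~\ref{lem5}, through cyclotomic polynomials.
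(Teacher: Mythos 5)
Your proof is correct and follows essentially the same route as the paper: for the ``if'' direction verify condition (x) of Theorem~\ref{thm7} directly, and for the converse take $\delta=\gcd D(\Sigma)$ in condition (x) (traced through the chain of implications in the theorem's proof), apply Lemma~\ref{lem5} with $n=1$, and rule out $k\geq 2$ using $\gcd D(\Sigma)=1$. Your explicit treatment of $p=2$ is a small but genuine improvement, since Lemma~\ref{lem5} is stated only for odd primes and the paper's proof silently omits that case.
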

\begin{proof}
    Assume that $E(\Sigma,q)$ contains an interval. By Theorem \ref{thm7} there exists $\delta>0$ such that for any $n\in\mathbb N$ there exists $k\in\mathbb N$ such that  $$\sum_{\sigma\in\Sigma}\exp\left(\frac{2\pi i 
 n \sigma q^k}{ \delta}\right)=0.$$
It follows from the proof of Theorem \ref{thm7} that $\delta$ can be chosen to be the greatest common divisor of $D(\Sigma)$. Define $\Sigma^*=(\Sigma-x)/\delta$, where $x=\min \Sigma$. Then $\Sigma^*\subset \mathbb Z$ and there exists $k\in\mathbb N$ such that
$$\sum_{s\in\Sigma^*}\exp\left(\frac{2\pi i s }{p^k}\right)=0.$$
By Lemma \ref{lem5},  $$\left\{\exp\left(\frac{2\pi i s }{p^k}\right)\colon s\in \Sigma^*\right\}=\left\{\exp\left(\frac{2\pi i t}{p}\right)\colon t\in\{0,1,\ldots, p-1\}\right\},$$
and $$\Sigma^* \operatorname{mod} p =p^{k-1}\{0,1,\ldots,p-1\}.$$
Since members of $\Sigma^*$ are relatively prime, we have in fact $k=1$.

The opposite implication follows from Theorem \ref{thm7}, since condition \ref{con9} is satisfied.
\end{proof}

\section{Final remarks}

\subsection{Achievement sets for multigeometric series}

Let $(k_1,\dots,k_m;\frac{1}{n})$ be a multigeometric series such that some $k_\ell$ is divisible by $n$, say $\frac {k_\ell}n=r\in\mathbb Z$. Then
\[
\underbrace{A\left(k_1,\dots,k_m;\frac{1}{n}\right)}_E=\{0,r\}+\underbrace{A\left(k_1',\dots,k_m';\frac{1}{n}\right)}_{E'}
\]
where $k_i'=k_i$ if $i\neq\ell$ and $k_\ell'=r$. Since $E=E'\cup rE'$, then $E$ has the same topological type as $E'$. Hence, in determining the type we may assume that none of $k_1,\dots,k_m$ is divisible by $n$. 

Let us present an example. For a sequence $(1,8;\frac14)$ the set $\Sigma$ is of the form $\{0,1,8,9\}$. Note that 
\[
(1,8;\frac14)=(\frac{1}{4},{\frac{8}{4}},\frac{1}{16},{\frac{8}{16}},\frac{1}{64},{\frac{8}{64}},\dots)=
\]
\[
(\frac{1}{4},2,\frac{1}{16},{\frac{1}{2}},\frac{1}{64},{\frac{1}{8}},\dots)
\]
After rearrangement of terms we obtain
\[
(2,{\frac{1}{2}},\frac{1}{4},{\frac{1}{8}},\frac{1}{16},\dots)
\]
Therefore $A(1,8;\frac14)=[0,1]\cup[2,3]$. This example shows that Nitecki Theorem cannot be reversed. However, 8 is divisible by 4. Now, we are ready to formulate our conjecture.  
\begin{prob}
Let $(k_1,\dots,k_m;\frac{1}{n})$ be a mutigeometric series such that $k_i\in\mathbb N$, $\Sigma=\{\sigma_0<\dots<\sigma_{n-1}\}$ has $n$ elements, none of $k_1,\dots,k_m$ is divisible by $n$ and $E(k_1,\dots,k_m;\frac{1}{n})$ has positive Lebesgue measure. Is is true that the set of remainders $t_i<n$ with $\sigma_i\equiv t_i$ equals $\{0,1,\dots,n-1\}$?    
\end{prob}

\subsection{Projections of some class of IFS attractors}
As we have mentioned in Introduction, the work \cite{K} of Richard Kenyon was motivated by studying projections of the one-dimensional Sierpi\'nski Gasket. It is the attractor $S$ in $\mathbb R^2$ for the three linear mappings $f_1(x,y)=\frac13(x,y)$, $f_2(x,y)=\frac13(x+1,y)$ and $f_3(x,y)=\frac13(x,y+1)$. Let $S_u=\pi_u(S)$ be the linear projection of $S$ onto the $x$-axis, where $\pi_u(x,y)=x+uy$. Kenyon observed that $S_u=\{\sum_{n=1}^\infty\alpha_n\frac{1}{3^n}:\alpha_n=0,1,u\}$, which in our notation is $S_u=E(\Sigma,\frac13)$ for $\Sigma=\{0,1,u\}$. In this section we make some observations using this approach.

Let $f_i:\bbR^2\to\bbR^2$, for $i=1,\dots,k$, be linear mapping of the form $f_i(x,y)=(\frac{x+a_i}{k},\frac{y+b_i}{k})$ where $a_i,b_i\in\bbR$. Let $\mathcal{F}=\{f_i\}_{i\leq k}$ be an IFS. Put
\[
S_\mathcal{F}=\{\sum_{n=1}^\infty\frac{1}{k^n}(x_n,y_n):(x_n,y_n)=(a_1,b_1),\dots,(a_k,b_k)\}
\]
Note that 
\[
f_i(S_\mathcal{F})=\frac{1}{k}S_{\mathcal{F}}+\frac{1}{k}(a_i,b_i)
\]
and therefore $S_\mathcal{F}=\bigcup_{i\leq k}f_i(S_\mathcal{F})$. This shows that $S_\mathcal{F}$ is an attractor of IFS $\mathcal{F}$. For fixed $u\in\bbR$, we have
\[
\pi_u(S_\mathcal{F})=\{\sum_{n=1}^\infty\frac{1}{k^n}(x_n+uy_n):(x_n,y_n)=(a_1,b_1),\dots,(a_k,b_k)\}=E(\Sigma_u,\frac{1}{k})
\]
where $\Sigma_u=\{a_i+ub_i:i\leq k\}$. Using Theorem \ref{thm7} one can say whether the projection $\pi_u(S_\mathcal{F})$ of an attractor $S_\mathcal{F}$ has positive measure or not. 

Note that the class of IFS attractors we have prescribed above is invariant under scalings, shiftings, and rotations. Using these operations, we may assume that $(a_1,b_1)=(0,0)$ and $(a_2,b_2)=(1,0)$. Then $\Sigma_u$ contains 0 and 1, and therefore the common divisor of  $D(\Sigma_u)$  is rational, if it exists. From that we infer that $\pi_u(S_\mathcal{F})$ has positive measure for at most countably many $u$'s. It was proved in \cite{PSS2}, that $\pi_u(S_\mathcal{F})$ has measure zero for almost all $u$. Compare also \cite{PSS}.

Consider the following example $f_1(x,y)=\frac14(x,y)$, $f_2(x,y)=\frac14(x+1,y)$, $f_3(x,y)=\frac14(x+1,y+1)$, $f_4(x,y)=\frac14(x,y+\sqrt{2})$. Then $\Sigma_u=\{0,1,1+u,u\sqrt{2}\}$ and $D(\Sigma_u)$ does not have a common divisor for any $u\in\bbR$. Thus every projection of the attractor of $\mathcal{F}=\{f_1,f_2,f_3,f_4\}$ has measure zero. 
Note that such an example does not exist for $k=3$.


\begin{thebibliography}{20}

\bibitem{BBFS} T. Banakh, A. Bartoszewicz, M. Filipczak, E. Szymonik, \textit{Topological and measure properties of some self-similar sets}, Topol. Methods Nonlinear Anal., 46(2), 1013--1028, 2015.

\bibitem{BBGS}T. Banakh, A. Bartoszewicz, S. Głąb, E. Szymonik, \textit{Algebraic and topological properties of some sets in $\ell_1$},
Colloq. Math.129, no.1, 75--85, 2012.


\bibitem{BFP}A. Bartoszewicz, M. Filipczak, F. Prus-Wi\'sniowski, \textit{Topological and algebraic aspects of subsums of series}, in \textit{Traditional and present-day topics in real analysis}, \L \'od\'z University Press, 345--366, 2013.

\bibitem{EinsiedlerWard}M. Einsiedler, T. Ward, \textit{Ergodic theory with a view towards number theory}.
Grad. Texts in Math., 259, xviii+481 pp.,
Springer-Verlag London, Ltd., London, 2011. 

\bibitem{F} C. Ferens, \textit{On the range of purely atomic probability measures}, Studia Math. 77, 261--263, 1984.

\bibitem{GN} J. A. Guthrie, J. E. Nymann, \textit{The topological structure of the set of subsums of an infinite series}, Colloq. Math. 55 no. 2,
323--327, 1988.

\bibitem{Jones}R. Jones,
\textit{Achievement sets of sequences}, Amer. Math. Monthly 118, no.6, 508--521, 2011.

\bibitem{Kakeya}  S. Kakeya, \textit{On the partial sums of an infinite series}, T\^{o}hoku Sc. Rep. 3, 159--163, 1915.

\bibitem{K} R. Kenyon, \textit{Projecting the one-dimensional Sierpi\'nski gasket}, Israel J. Math. 97, 221--238, 1997.

\bibitem{LW} J. Lagarias, Y. Wang, \textit{
Integral self-affine tiles in $\mathbb R^n$. I. Standard and nonstandard digit sets.} J. London Math. Soc. (2) 54 (1996), no. 1, 161--179


\bibitem{Lang}S. Lang, 
\textit{Algebra}.
Revised third edition
Grad. Texts in Math., 211
Springer-Verlag, New York, 2002. xvi+914 pp.

\bibitem{Nitecki}Z. Nitecki, \textit{Cantorvals and subsum sets of null sequences},
Amer. Math. Monthly 122, no.9, 862--870, 2015.

\bibitem{NS} J. E. Nymann, R. A. S\'aenz, \textit{On the paper of Guthrie and Nymann on subsums of infinite series}, Colloq. Math. 83, 1--4, 2000.

\bibitem{PSS} Y. Peres, K. Simon, B. Solomyak. \textit{
Self-similar sets of zero Hausdorff measure and positive packing measure.} Israel J. Math. 117 (2000), 353--379.

\bibitem{PSS2} Y. Peres, K. Simon, B. Solomyak. \textit{
Fractals with positive length and zero Buffon needle probability.}
Amer. Math. Monthly 110 (2003), no. 4, 314--325.


\bibitem{Schief}A. Schief, \textit{
Separation properties for self-similar sets},
Proc. Amer. Math. Soc. 122 (1994), no. 1, 111--115.

\bibitem{Solomyak2}B. Solomyak,
\textit{Notes on Bernoulli convolutions. Fractal geometry and applications: a jubilee of Benoît Mandelbrot. Part 1}, 207--230.
Proc. Sympos. Pure Math., 72, Part 1,
American Mathematical Society, Providence, RI, 2004.

\bibitem{Solomyak} B. Solomyak,
\textit{On the random series $\sum\pm\lambda^n$ (an Erd{\H o}s problem)},
Ann. of Math. (2)142, no.3, 611–-625, 1995.

\bibitem{Varju}P. P. Varj\'{u},
\textit{Recent progress on Bernoulli convolutions}, European Congress of Mathematics, 847–-867.
European Mathematical Society (EMS), Z{\"u}rich, 2018.

\bibitem{WS} A. D. Weinstein, B. E. Shapiro, \textit{On the structure of a set of $\bar{\alpha}$-representable numbers}, Izv. Vys\v{s}. U\v{c}ebn. Zaved. Matematika. 24, 8--11, 1980.
\end{thebibliography}
\end{document}